\newtheorem{theorem}{Theorem}[section]
\newtheorem{lemma}[theorem]{Lemma}
\theoremstyle{definition}
\numberwithin{equation}{section}
\begin{document}

\title[]{On upper and lower fast Knintchine spectra of continued fractions}%

\author {Lulu Fang}
\address{School of Science, Nanjing University of Science and Technology, Nanjing 210094, China}
\email{fanglulu@njust.edu.cn}

\author{Lei Shang}
\address{School of Mathematics, South China University of Technology, Guangzhou 510640, China}
\email{auleishang@gmail.com}

\author {Min Wu}
\address{School of Mathematics, South China University of Technology, Guangzhou 510640, China}
\email{wumin@scut.edu.cn}

\subjclass[2010]{Primary 11K50; Secondary 37E05, 28A80}
\keywords{Continued fractions, Upper and lower fast Khintchine spectra, Hausdorff dimension}

\begin{abstract}
Let $\psi:\mathbb{N}\to \mathbb{R}^+$ be a function satisfying $\phi(n)/n\to \infty$ as $n \to \infty$. We investigate from a multifractal analysis point of view the growth rate of the sums $\sum^n_{k=1}\log a_k(x)$ relative to $\psi(n)$, where $[a_1(x),a_2(x), a_3(x)\cdots]$ denotes the continued fraction expansion of $x\in (0,1)$. The upper (resp. lower) fast Khintchine spectrum is defined by the Hausdorff dimension of the set of all points $x$ for which the upper (resp. lower) limit of $\frac{1}{\psi(n)}\sum^n_{k=1}\log a_k(x)$ is $1$.
The precise formulas of these two spectra are completely determined, which strengthens a result of Liao and Rams (2016).

\end{abstract}

\maketitle

\section{Introduction}

Let $G:[0,1) \to [0,1)$ be the \emph{Gauss map}, defined as $G(0):=0$ and
\[
G(x):= 1/x - \lfloor1/x\rfloor,\ \ \ \forall x \in (0,1),
\]
where $\lfloor\cdot\rfloor$ stands for the integer part of a number. For $x\in (0,1)$, put $a_1(x):=\lfloor1/x\rfloor$ and
$a_n(x):=a_1(G^{n-1}(x))$ for $n \geq 2$, where $G^k$ denotes the $k$th iteration of $G$. Then $x$ admits a unique \emph{continued fraction expansion} of the form
\begin{equation}\label{CF}
x = \dfrac{1}{a_1(x) +\dfrac{1}{a_2(x)  +\dfrac{1}{a_3(x)+\ddots}}}
\end{equation}
where $a_1(x),a_2(x),a_3(x),\cdots$ are positive integers, and are called the \emph{partial quotients} of $x$.
See \cite{IK02, Khin64} for more information of continued fractions.

The \emph{Khintchine exponent} of $x$ is defined as the growth rate of the geometric average of partial quotients, namely,
\[
\mathrm{k}(x):=\lim_{n\to \infty} \frac{\log a_1(x)+\cdots+\log a_n(x)}{n}
\]
if the limit exists. Khintchine \cite{Khin} proved that for Lebesgue almost all $x\in (0,1)$, $\mathrm{k}(x) = \log c$, where $c=2.6854...$ is called the \emph{Khintchine constant}, see \cite{BBC} for details.
Furthermore, Fan et al. \cite{FLWW09} studied from a multifractal analysis point of view the set of points with a given Khintchine exponent that is different from $\log c$, and showed that the dimensional function (called the \emph{Khintchine spectrum})
\[
[0,\infty] \ni \alpha \mapsto K(\alpha):=\dim_{\rm H}\big\{x\in(0,1): \mathrm{k}(x) =\alpha\big\},
\]
is real analytic, strictly increasing in $[0, \log c)$ and strictly decreasing in $[\log c, \infty)$; while it is neither convex nor concave, see \cite{FJLR, IJ} for general results.
We point out that $K(\infty) =1/2$, see for example \cite[Theorem 7.1]{IJ}.
This means that there are uncountably many points with infinite Khintchine exponent and then leads to the question of detailed analyses of numbers with infinite Khintchine exponent.

For this purpose, let $\psi:\mathbb{N}\to \mathbb{R}^+$ be a function satisfying $\phi(n)/n\to \infty$ as $n \to \infty$ and write
\[
E(\psi):=\left\{x\in (0,1):\lim_{n\to \infty} \frac{\log a_1(x)+\cdots+\log a_n(x)}{\psi(n)}=1\right\}.
\]
The Hausdorff dimension of $E(\psi)$ is called the \emph{fast Khintchine spectrum}, and then completely determined by Fan et al. \cite{FLWW13}.

\begin{theorem}[{\cite[Theorem 1.1]{FLWW13}}]\label{lim}
Assume that $\psi:\mathbb{N}\to \mathbb{R}^+$ is non-decreasing. Then
\[
\dim_{\rm H}E(\psi) = \frac{1}{\beta+1}, \ \ \ \text{with}\ \ \beta:=\limsup_{n\to \infty} \frac{\psi(n+1)}{\psi(n)}.
\]
\end{theorem}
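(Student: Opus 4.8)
The plan is to prove the two bounds $\dim_{\rm H}E(\psi)\le\frac1{\beta+1}$ and $\dim_{\rm H}E(\psi)\ge\frac1{\beta+1}$ separately, after recording the standard metric facts. Write $I_n(a_1,\dots,a_n)$ for the rank-$n$ cylinder, $q_n$ for the denominator of its $n$-th convergent, and $I_n(x):=I_n(a_1(x),\dots,a_n(x))$. Then $\prod_{i=1}^{n}a_i\le q_n\le 2^{n}\prod_{i=1}^{n}a_i$ and $\frac1{2q_n^{2}}\le|I_n(a_1,\dots,a_n)|\le\frac1{q_n^{2}}$, so $\log|I_n(a_1,\dots,a_n)|=-2\sum_{i=1}^{n}\log a_i+O(n)$; since $\psi(n)/n\to\infty$, every term of size $O(n)$ or $C^{n}$ occurring below is negligible against $\psi(n)$. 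For $x\in E(\psi)$ and $\varepsilon>0$ one has $(1-\varepsilon)\psi(n)\le\sum_{i\le n}\log a_i(x)\le(1+\varepsilon)\psi(n)$ for all $n\ge N(x,\varepsilon)$, so decomposing $E(\psi)$ into the countably many pieces on which $N(x,\varepsilon)\le N$ reduces the upper bound to covering one fixed such piece. I stress at the outset that covering $E(\psi)$ by full rank-$n$ cylinders only yields the bound $1/2$; the exponent $\beta$ enters because, along a subsequence realizing the $\limsup$, the growth condition forces an anomalously large partial quotient, and both the efficient cover and the model measure must exploit this.

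For the upper bound fix $s>\frac1{\beta+1}$; since $\beta\ge1$ we may assume $s<\frac12$ (when $\beta=1$ the bound $\dim_{\rm H}E(\psi)\le\frac12$ already follows from the full-cylinder cover together with the counting estimate $\#\{(a_1,\dots,a_n):\sum_{i\le n}\log a_i\le(1+\varepsilon)\psi(n)\}=e^{(1+\varepsilon+o(1))\psi(n)}$, itself a consequence of $\psi(n)/n\to\infty$ via Rankin's trick; the case $\beta=\infty$ is covered by the argument below for every $s>0$). Fix $n_j\to\infty$ with $\psi(n_j+1)/\psi(n_j)\to\beta$. For $x$ in the piece being covered, $\log a_{n_j+1}(x)\ge\gamma_j:=(1-\varepsilon)\psi(n_j+1)-(1+\varepsilon)\psi(n_j)$, and $\gamma_j/\psi(n_j)\to(1-\varepsilon)\beta-(1+\varepsilon)>0$ for $\varepsilon$ small, so $\gamma_j\to\infty$; set $M_j:=\lceil e^{\gamma_j}\rceil$. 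Cover the piece by the intervals $J_{n_j}(a_1,\dots,a_{n_j}):=\bigcup_{a\ge M_j}I_{n_j+1}(a_1,\dots,a_{n_j},a)$, ranging over all $(a_1,\dots,a_{n_j})$ with $\sum_{i\le n_j}\log a_i\le(1+\varepsilon)\psi(n_j)$; each is an interval with $|J_{n_j}(a_1,\dots,a_{n_j})|\asymp|I_{n_j}(a_1,\dots,a_{n_j})|/M_j$. Hence the $s$-sum of this cover is, up to a factor $e^{o(\psi(n_j))}$, at most $M_j^{-s}\,\Sigma_j$ where $\Sigma_j:=\sum e^{-2s\sum_{i\le n_j}\log a_i}$ over the admissible tuples, and Rankin's trick gives $\Sigma_j\le e^{(1-2s)(1+\varepsilon)\psi(n_j)(1+o(1))}$ (legitimate since $s<\frac12$). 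With $M_j\asymp e^{\gamma_j}$ this bound equals $\exp\!\bigl(\psi(n_j)[\,1-s(\beta+1)+O(\varepsilon)+o(1)\,]\bigr)$, which tends to $0$ once $\varepsilon$ is small, because $1-s(\beta+1)<0$; since the mesh $\to0$ as well, $\mathcal{H}^{s}(E(\psi))=0$, and letting $s\downarrow\frac1{\beta+1}$ gives the upper bound.

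For the lower bound fix $s<\frac1{\beta+1}$. Put $\psi(0):=0$, $A_n:=\lfloor e^{\psi(n)-\psi(n-1)}\rfloor$, and let $F$ be the set of $x$ with $a_n(x)\in\{A_n,A_n+1,\dots,2A_n\}$ for every $n\ge1$. On $F$ one has $\log a_n(x)=\psi(n)-\psi(n-1)+O(1)$, hence $\sum_{i\le n}\log a_i(x)=\psi(n)+O(n)$ and so $F\subseteq E(\psi)$. Let $\mu$ be the probability measure on $F$ giving equal mass to each of the $\prod_{i\le n}(A_i+1)$ admissible rank-$n$ cylinders, so that on $F$, $\mu(I_n(x))=e^{-\psi(n)(1+o(1))}$ and $|I_n(x)|=e^{-2\psi(n)(1+o(1))}$. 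The key local fact is that inside $I_n(x)$ all the $\mu$-mass lies in the sub-interval $J_n(x)$, the convex hull of the admissible rank-$(n+1)$ sub-cylinders, with $I_{n+1}(x)\subseteq J_n(x)\subseteq I_n(x)$, $\mu(J_n(x))=\mu(I_n(x))$, and $|J_n(x)|\asymp|I_n(x)|/A_{n+1}=e^{-(\psi(n)+\psi(n+1))(1+o(1))}$. Given a small ball $B(x,r)$ with $x\in F$, choose $n$ with $|I_{n+1}(x)|\le r<|I_n(x)|$: if $r\ge|J_n(x)|$ then $B(x,r)$ meets only $O(1)$ mass-carrying rank-$n$ cylinders, so $\mu(B(x,r))\le C\mu(I_n(x))$; if $r<|J_n(x)|$ then $B(x,r)$ meets $O(r/|I_{n+1}(x)|+1)$ of the $\asymp A_{n+1}$ equally weighted rank-$(n+1)$ sub-cylinders of $J_n(x)$, so $\mu(B(x,r))\le C(r/|I_{n+1}(x)|+1)\mu(I_{n+1}(x))$. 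In either case a short computation using $\psi(n+1)/\psi(n)\le\beta+o(1)$ and $s(\beta+1)<1$ yields $\mu(B(x,r))\le r^{s}$ for all small $r$, uniformly on $F$; the mass distribution principle then gives $\dim_{\rm H}E(\psi)\ge\dim_{\rm H}F\ge s$, and $s\uparrow\frac1{\beta+1}$ finishes the proof.

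I expect the lower bound — concretely, the ball estimate $\mu(B(x,r))\le Cr^{s}$ at all scales — to be the main obstacle. The reason is that $\mu$ has genuinely different local scaling at the "cylinder scales" $|I_n(x)|$, where $\log\mu(I_n(x))/\log|I_n(x)|\approx\frac12$, and at the "gap scales" $|J_n(x)|\approx|I_n(x)|/A_{n+1}$, where this ratio drops to $\approx(1+\psi(n+1)/\psi(n))^{-1}$, which along the subsequence realizing the $\limsup$ is $\approx\frac1{\beta+1}$: reading the dimension off the cylinder scales alone would give the wrong value $\frac12$, so the estimate must be carried out carefully at the intermediate scales — precisely where the hypothesis on $\beta$ is used. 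Viewed from the other side, this is the same reason one must discard, in the upper-bound cover, the part of each rank-$n_j$ cylinder where $a_{n_j+1}$ is not large: exactly that truncation makes the cover by the $J_{n_j}(\cdot)$, rather than by full cylinders, sharp enough to detect $\frac1{\beta+1}$.
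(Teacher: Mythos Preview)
The paper does not prove this theorem: it is quoted as a known result from Fan--Liao--Wang--Wu \cite{FLWW13} and serves only as background for the paper's own study of $\overline{E}(\psi)$ and $\underline{E}(\psi)$. There is therefore no proof in the present paper to compare your proposal against.

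That said, your argument is correct and is essentially the one in the original source. Your lower-bound construction --- the Cantor set $F=\{x:\,A_n\le a_n(x)\le 2A_n\}$ with $A_n=\lfloor e^{\psi(n)-\psi(n-1)}\rfloor$ and the equidistributed measure $\mu$ on it --- is exactly the set $\mathbb{E}(\{s_n\})$ of Lemma~\ref{2S} in the present paper (itself lifted from \cite{FLWW09,FLWW13}) with $s_n=A_n$, and your mass-distribution computation at the two scales $|I_n(x)|$ and $|J_n(x)|$ is precisely the proof of that lemma; the formula there gives
\[
\dim_{\rm H}F=\Bigl(2+\limsup_{n\to\infty}\frac{\log A_{n+1}}{\log A_1+\cdots+\log A_n}\Bigr)^{-1}=\Bigl(2+\limsup_{n\to\infty}\frac{\psi(n+1)-\psi(n)}{\psi(n)}\Bigr)^{-1}=\frac{1}{\beta+1},
\]
in agreement with what you obtain. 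Your upper bound --- covering by the truncated intervals $J_{n_j}(a_1,\dots,a_{n_j})=\bigcup_{a\ge M_j}I_{n_j+1}(a_1,\dots,a_{n_j},a)$ along a subsequence realizing $\beta$, combined with the counting estimate $\#\{(a_1,\dots,a_n):\sum_i\log a_i\le L\}=e^{L(1+o(1))}$ for $L/n\to\infty$ --- is likewise the standard route taken in \cite{FLWW13}. Your discussion of why full cylinders give only $1/2$ and why the truncation is what detects $\beta$ is accurate and matches the heuristic behind both bounds.
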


We remark that the assumption on the monotonicity of $\psi$ is necessary. In fact, $E(\psi)$ is non-empty if and only if $\psi$ has a monotonicity in a certain sense, see \cite[Lemma 3.1]{FLWW13} for precise statements.
Indeed, the limit in $E(\psi)$ means that $\psi(n)$ should keep pace with the sum $\log a_1(x)+\cdots+\log a_n(x)$ for sufficiently large $n$, but the sum is non-decreasing, so $\psi(n)$ must be ultimately ``increasing".

In the present paper, we are concerned with the following sets:
\[
\overline{E}(\psi):=\left\{x\in (0,1): \limsup_{n\to \infty} \frac{\log a_1(x)+\cdots+\log a_n(x)}{\psi(n)}=1\right\}
\]
and
\[
\underline{E}(\psi):=\left\{x\in (0,1): \liminf_{n\to \infty} \frac{\log a_1(x)+\cdots+\log a_n(x)}{\psi(n)}=1\right\},
\]
where $\psi$ is as defined above. Then $E(\psi)=\overline{E}(\psi) \cap\underline{E}(\psi)$. The Hausdorff dimensions of $\overline{E}(\psi)$ and $\underline{E}(\psi)$ are called \emph{upper} and \emph{lower fast Khintchine spectra} respectively. Unlike the set $E(\psi)$, we will show below that $\overline{E}(\psi)$ and $\underline{E}(\psi)$ are always non-empty, see Lemmas \ref{overiff} and \ref{underiff}.
These two spectra have been studied by Liao and Rams \cite[Theorem 1.2]{LR} under some restrictions on the growth rate of $\psi$ (i.e., $b,B>1$, see below for their definitions).
Roughly speaking, their results can be applied to functions with fast growth speed (e.g., exponential functions), but can not be used to polynomial functions.
However, for $\psi(n)=n^p$ $(p>1)$, it follows from Theorem \ref{lim} that the upper and lower fast Khintchine spectra are not less than $1/2$; applying standard covering arguments, we see that these two spectra not greater than $1/2$, see for example \cite{FMS, JM, WW}. Hence the Hausdorff dimensions of $\overline{E}(\psi)$ and $\underline{E}(\psi)$ are $1/2$ for polynomial functions $\psi$, which indicates that the results of Liao and Rams may be true for the cases $b=1$ and $B=1$. This is precisely what we are covering in the present paper by giving the upper and lower fast Khintchine spectra without any extra assumptions on $\psi$.

\begin{theorem}\label{OU}
Let $\psi:\mathbb{N}\to \mathbb{R}^+$ be defined above. Then
\[
\dim_{\rm H}\overline{E}(\psi) = \frac{1}{b+1}\ \ \text{and}\ \ \dim_{\rm H}\underline{E}(\psi) = \frac{1}{B+1},
\]
where $b, B \in [1,\infty]$ are given by
\[
\log b:=\liminf_{n \to \infty}\frac{\log \psi(n)}{n} \ \ \text{and}\ \ \log B:=\limsup_{n \to \infty}\frac{\log \psi(n)}{n}.
\]

\end{theorem}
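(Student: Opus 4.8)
The plan is to treat the upper and lower spectra in parallel, establishing matching upper and lower bounds for each. For the upper bound on $\dim_{\rm H}\overline{E}(\psi)$, observe that if $x\in\overline{E}(\psi)$ then $\log a_1(x)+\cdots+\log a_n(x)\geq (1-\varepsilon)\psi(n)$ for infinitely many $n$; writing $S_n(x)=\sum_{k\leq n}\log a_k(x)$, this means $\prod_{k\leq n}a_k(x)\geq e^{(1-\varepsilon)\psi(n)}$ infinitely often. Using the standard estimate that the cylinder of order $n$ determined by $(a_1,\dots,a_n)$ has length comparable to $\prod_{k\leq n}a_k^{-2}$, I would cover $\overline{E}(\psi)$ by unions over large $n$ of those cylinders on which the product is large, and carry out the usual series computation: for $s>\tfrac{1}{b+1}$ one has $\sum_n\big(\sum\{(\prod a_k^{-2})^s : \prod a_k\geq e^{(1-\varepsilon)\psi(n)}\}\big)<\infty$ once $\varepsilon$ is small, because $\liminf_n \log\psi(n)/n=\log b$ forces $\psi(n)\geq (b-\delta)^n$ for all large $n$ along a tail that controls the cover. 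This is essentially the ``$\limsup$-set'' covering argument already used for $n^p$ in \cite{FMS, JM, WW}, adapted to a general $\psi$. The lower-spectrum upper bound $\dim_{\rm H}\underline{E}(\psi)\leq \tfrac{1}{B+1}$ is analogous but uses that for $x\in\underline{E}(\psi)$ the inequality $S_n(x)\geq(1-\varepsilon)\psi(n)$ holds for \emph{all} large $n$, combined with $\limsup_n\log\psi(n)/n=\log B$, so one selects a subsequence $n_j$ along which $\psi(n_j)\geq(B-\delta)^{n_j}$ and covers using cylinders of those orders.

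For the lower bounds, which I expect to be the substantive part, the idea is to construct a Cantor-like subset of $\overline{E}(\psi)$ (resp.\ $\underline{E}(\psi)$) of the prescribed dimension by prescribing the partial quotients along a carefully chosen sequence of ``checkpoint'' levels and leaving them bounded (say in $\{1,\dots,M\}$) elsewhere. For $\overline{E}(\psi)$: pick a subsequence $n_j\to\infty$ realizing $\liminf \log\psi(n_j)/n_j=\log b$, and at each $n_j$ force a single very large partial quotient $a_{n_j}$ of size roughly $e^{\psi(n_j)-\psi(n_{j-1})}$ (or distribute the needed mass over a short block) so that $S_{n_j}\approx\psi(n_j)$, while between checkpoints the bounded partial quotients keep $S_n$ from overshooting $\psi$ — using that $\psi(n)/n\to\infty$ guarantees there is enough ``room'' and that the $\limsup$ of $S_n/\psi(n)$ is exactly $1$. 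The Hausdorff dimension of such a set is governed, via a standard mass-distribution / Hausdorff-dimension-of-homogeneous-Moran-sets estimate (as in \cite{FLWW13}), by the spacing $n_j/n_{j+1}$, and a suitable choice pushes this to $\tfrac{1}{b+1}$; the term $\prod a_k^{-2s}$ with one factor $a_{n_j}\approx e^{\psi(n_j)}$ contributes $e^{-2s\psi(n_j)}\approx (\text{length of cylinder at level }n_{j-1})^{?}$, and balancing the local dimension exponent yields the Khintchine-type value $\tfrac{1}{b+1}$ in the limit. For $\underline{E}(\psi)$ one instead needs $S_n/\psi(n)\to 1$ to \emph{fail to drop below} $1$ only in the $\liminf$, i.e.\ $S_n\geq(1-o(1))\psi(n)$ along all $n$ but equal to $\psi(n)$ along a sparse subsequence realizing $\limsup\log\psi(n)/n=\log B$; this forces a denser set of large partial quotients and hence the smaller dimension $\tfrac{1}{B+1}$.

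The main obstacle, I expect, is the lower-bound construction in the borderline regime $b=1$ or $B=1$ — precisely the case excluded by Liao and Rams. When $b=1$ the checkpoints $n_j$ can be taken with $n_{j+1}/n_j\to 1$, and one must verify that a Moran construction with very mild lacunarity still has full dimension $\tfrac12$, which requires controlling the bounded-partial-quotient blocks carefully (their contribution to the dimension must tend to the full pressure value). Here the hypothesis $\psi(n)/n\to\infty$ is essential: it ensures that even a single large partial quotient inserted at rare levels suffices to absorb the required growth $\psi(n_j)-\psi(n_{j-1})$ without the large-quotient levels being too frequent, so the ``entropy cost'' of the construction is negligible in the limit. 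I would handle this by first doing the construction for $b>1$ (mimicking \cite{LR} and \cite{FLWW13}) and then obtaining $b=1$ by an exhaustion argument: $\overline{E}(\psi)$ contains, for every $\delta>0$, a copy of a set of dimension $\tfrac{1}{1+\delta}$ obtained by a construction keyed to an auxiliary growth rate, and letting $\delta\to0$ gives dimension $\tfrac12$. The remaining routine points — verifying the cylinder length estimates, that the constructed points genuinely have the stated $\limsup$/$\liminf$ equal to $1$ (not just $\geq$ or $\leq$), and the measure-theoretic lower bound for Hausdorff dimension — follow the template of \cite{FLWW13} and I would cite it rather than reproduce it.
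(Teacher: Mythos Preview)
Your upper-bound covering argument has a concrete gap when $b>1$. You propose to cover by order-$n$ cylinders with $\prod_{k\le n}a_k\ge e^{(1-\varepsilon)\psi(n)}$ and claim that $\sum_n\sum\{(\prod a_k)^{-2s}:\prod a_k\ge e^{(1-\varepsilon)\psi(n)}\}<\infty$ for every $s>1/(b+1)$. But for $b>1$ one needs $s<1/2$, and then already the inner sum diverges: restricting to $a_1=\cdots=a_{n-1}=1$ and $a_n\ge K$ gives $\sum_{a_n\ge K}a_n^{-2s}=\infty$. The covering arguments in \cite{FMS,JM,WW} that you invoke produce only the bound $1/2$, which is exactly the $b=1$ value. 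The paper circumvents this with a short but essential reduction (its Lemma~2.2): if $\Pi_n(x)\ge a^{c^n}$ infinitely often then, by a pigeonhole/telescoping argument, $a_n(x)\ge a^{(c-\varepsilon)^n}$ infinitely often, after which {\L}uczak's theorem gives the bound $1/(c+1)$ directly. This product-to-single-quotient step is the missing idea in your upper bound.

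Your lower-bound sketch also diverges from the paper and leaves real work undone. First, $\psi$ is \emph{not} assumed monotone, so between your checkpoints $\psi(n)$ can dip and $S_n/\psi(n)$ overshoot $1$; deferring this to \cite{FLWW13} does not help, since that paper assumes $\psi$ non-decreasing. The paper handles this by passing to monotone surrogates: $\phi(n)=\min_{k\ge n}\psi(k)$ for $\overline{E}(\psi)$, and a more delicate sequence $T_j=\sup_{k\ge 1}c_{j,k}$ for $\underline{E}(\psi)$. Second, and more importantly, the $\underline{E}(\psi)$ lower bound is where Liao--Rams' argument genuinely fails at $B=1$ (their key condition $\liminf_n \sum_{k\le n}\psi(k)/\psi(n)<\infty$ is violated e.g.\ for $\psi(n)=n^p$), and the paper's main technical contribution is precisely the construction of $\{T_j\}$ with $T_j\le T_{j+1}\le T_j^{B+\varepsilon}$ and $\liminf_j\log T_j/\psi(j)=1$, which then feeds into the $\mathbb{E}(\{s_n\})$ dimension formula (Lemma~2.4). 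Your description (``$S_n\ge(1-o(1))\psi(n)$ for all $n$, equality along a sparse subsequence'') names the target but supplies no sequence realizing it; this is exactly the hard part.
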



As mentioned above, Liao and Rams \cite{LR} dealt with the cases $b, B >1$.
However, our approach works well for all cases of $b$ and $B$ instead of the remaining special cases $b=1$ and $B=1$.
Comparing with the methods of Liao and Rams, we no longer treat $\log a_1(x)+\cdots+\log a_n(x)$ as a sum, but as a logarithm of the product of partial quotients, i.e., $\log (a_1(x)\cdots a_n(x))$. This is the difference between us and Liao and Rams, and is also the main advantage of our method.
The little change helps to establish powerful results (see Lemmas \ref{F} and \ref {Fwidehat}) for calculating the upper bounds of $\dim_{\rm H}\overline{E}(\psi)$ and $\dim_{\rm H}\underline{E}(\psi)$,
which is slightly better than that of Liao and Rams since their proofs of this part heavily rely on $b, B >1$, see \cite[p.\,71\,\&\,75]{LR}.
For the lower bounds of $\dim_{\rm H}\overline{E}(\psi)$ and $\dim_{\rm H}\underline{E}(\psi)$, we also consider the product of partial quotients as a whole, and then use Lemma \ref{2S} to obtain them by constructing suitable sequences.

Liao and Rams \cite{LR} remarked that $b\leq B\leq \beta$, and one can construct some $\psi$ such that the values of $b,B$ and $\beta$ are all different. Here we give a concrete example to show this.
For $k \geq 1$, let $n_k:=1!+2!+\cdots+k!$ and
\[
\psi(n):=
\left\{
  \begin{array}{ll}
    5^{k-1}4^{n-(1!+3!+\cdots+(2k-1)!)}3^{1!+3!+\cdots+(2k-1)!}, & \hbox{$n_{2k-1}<n\leq n_{2k}$;} \\
    5^{k}4^{2!+4!+\cdots+(2k)!}3^{n- (2!+4!+\cdots+(2k)!)},& \hbox{$n_{2k}<n \leq n_{2k+1}$.}
  \end{array}
\right.
\]
Then $b=3, B=4$ and $\beta=15$ since
\[
\lim_{k \to \infty} \frac{\sum^k_{j=1}(2j)!}{\sum^{2k}_{j=1}j!} = \lim_{k \to \infty} \frac{\sum^k_{j=1}(2j+1)!}{\sum^{2k+1}_{j=1}j!} =1 \ \ \ \text{and}\ \ \ \lim_{k \to \infty} \frac{\psi(n_{2k}+1)}{\psi(n_{2k})}=15.
\]
In other words, these three fast Khintchine spectra can be all different, which is a new phenomenon in continued fractions and in symbolic systems with countably many symbols, since the multifractal spectra in symbolic systems with finitely many symbols are always the same when limit is replaced by limsup or liminf, see \cite{BRS, Bes, Egg, FSS} more information.

\section{Proof of Theorem \ref{OU}}
Before proving Theorem \ref{OU}, we first give several useful lemmas. The first result is due to {\L}uczak \cite{Luc97}, see also \cite{JM, WW} for general results.

\begin{lemma}[\cite{Luc97}]\label{Luc}
Let $a, c \in (1,\infty)$. Then
\begin{align*}
\dim_{\rm H}&\left\{x\in (0,1): a_n(x) \geq a^{c^n},\forall n\geq 1\right\} \\
=& \dim_{\rm H}\left\{x\in (0,1): a_n(x) \geq a^{c^n},\text{i.m.}\ n \in \mathbb{N}\right\} = \frac{1}{c+1},
\end{align*}
where ``\text{i.m.}" denotes ``infinitely many".
\end{lemma}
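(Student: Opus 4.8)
The plan is to prove Lemma~\ref{Luc} by sandwiching the Hausdorff dimension between a lower bound for the smaller ``for all $n$'' set and an upper bound for the larger ``infinitely many $n$'' set, using the classical metric theory of continued fractions. First I would recall the standard distortion facts: if $x$ lies in the fundamental cylinder $I(a_1,\dots,a_n)$ determined by prescribing the first $n$ partial quotients, then $\abs{I(a_1,\dots,a_n)}$ is comparable to $1/q_n^2$, where $q_n=q_n(a_1,\dots,a_n)$ is the denominator of the $n$th convergent, and $q_n$ satisfies $\prod_{k=1}^n a_k \le q_n \le \prod_{k=1}^n (a_k+1) \le 2^n \prod_{k=1}^n a_k$. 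These give two-sided control of cylinder lengths purely in terms of the product of the partial quotients, which is exactly what makes the growth condition $a_n \ge a^{c^n}$ tractable.

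For the upper bound I would work with the ``i.m.'' set $F:=\{x: a_n(x)\ge a^{c^n} \text{ for infinitely many } n\}$, which contains the ``for all'' set. For each $N$, cover $F$ by the cylinders $I(a_1,\dots,a_n)$ with $n\ge N$ and $a_n \ge a^{c^n}$; summing the $s$-dimensional volumes $\sum 1/q_n^{2s} \asymp \sum \prod_{k<n}(a_k^{-2s}) \sum_{a_n \ge a^{c^n}} a_n^{-2s}$, the inner sum over $a_n$ is of order $a^{-(2s-1)c^n}$, and the outer sums over $a_1,\dots,a_{n-1}$ converge provided $2s>1$. Because $c^n\to\infty$, the tail $\sum_{n\ge N}$ of these geometric-type terms tends to $0$ as $N\to\infty$ whenever $2s - 1 > 0$ and — crucially — one checks that the factor $a^{-(2s-1)c^n}$ beats the at-most-geometric growth coming from $a_1,\dots,a_{n-1}$; a short computation shows the series converges exactly when $s > 1/(c+1)$, giving $\dim_{\rm H} F \le 1/(c+1)$.

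For the lower bound I would restrict to the Cantor-type subset $E:=\{x: a^{c^n} \le a_n(x) < 2a^{c^n} \text{ for all } n\}$ of the ``for all'' set and build a natural mass distribution on it, then apply the mass distribution principle (or Billingsley's lemma / Frostman). On a level-$n$ cylinder surviving in $E$, the length is $\asymp q_n^{-2} \asymp \prod_{k\le n} a_k^{-2} \asymp a^{-2(c+\cdots+c^n)}$, there are $\asymp a^{c^{n+1}}$ choices for the next partial quotient, and one estimates the local dimension $\log \mu(I_n)/\log\abs{I_n}$; since $c^{n+1}/(c^n+\cdots+c) \to c+1$ as $n\to\infty$ (here I use $c>1$, so the geometric sum is dominated by its last term with ratio $\to c$), the local dimension tends to $1/(c+1)$ at every point of $E$, forcing $\dim_{\rm H} E \ge 1/(c+1)$. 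Combining the three inequalities $1/(c+1)\le \dim_{\rm H} E \le \dim_{\rm H}(\text{``for all''}) \le \dim_{\rm H} F \le 1/(c+1)$ finishes the proof.

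The main obstacle is the lower bound: one must verify that the constructed measure is well-defined (consistent under refinement) and, more importantly, control the ratio of $\log\mu$ to $\log\abs{\cdot}$ not just along cylinders but along arbitrary small balls centered at points of $E$ — the usual subtlety being that a small ball may meet several sibling cylinders whose next partial quotient is large, so one needs the standard ``comparison between a ball and the cylinder containing it'' argument, using that consecutive cylinder lengths here are very unbalanced (ratio $\asymp a^{-c^{n+1}}$). This is routine in the {\L}uczak-type setting but is where the bulk of the genuine work lies; the upper bound, by contrast, is a direct covering computation. Since the statement is quoted verbatim from \cite{Luc97} (with the ``i.m.'' refinement from \cite{JM, WW}), one may alternatively just cite these references, but the sketch above is the self-contained route.
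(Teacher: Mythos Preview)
The paper does not prove this lemma at all --- it is quoted from \cite{Luc97} and used as a black box --- so your closing remark that one may simply cite {\L}uczak is precisely the paper's ``proof''. Your self-contained sketch, however, has a real gap in the upper bound. The covering sum you write down is, up to constants,
\[
\sum_{n\ge N}\ \sum_{a_1,\dots,a_{n-1}\ge 1}\ \sum_{a_n\ge a^{c^n}} \abs{I(a_1,\dots,a_n)}^{s}
\ \le\ \sum_{n\ge N}\Bigl(\sum_{j\ge 1} j^{-2s}\Bigr)^{n-1}\sum_{a_n\ge a^{c^n}} a_n^{-2s},
\]
and this requires $2s>1$ merely for the inner tail and the outer factors to be finite; for every $s\le 1/2$ both diverge. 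Thus your argument yields only $\dim_{\rm H}F\le 1/2$, not $\le 1/(c+1)$, and the assertion that ``the series converges exactly when $s>1/(c+1)$'' is false: for $s\in(1/(c+1),1/2)$ the exponent $(2s-1)c^n$ is negative and the inner sum is already infinite. Getting below $1/2$ is exactly the nontrivial content of {\L}uczak's theorem --- one must exploit the \emph{cumulative} effect of the infinitely many large partial quotients rather than cover by a single occurrence --- which is why the paper cites the result instead of reproving it.

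There is also a slip in the lower bound: $c^{n+1}/(c+\cdots+c^n)\to c-1$, not $c+1$, and along cylinders your mass distribution has local dimension $1/2$, not $1/(c+1)$. The value $1/(c+1)$ appears only when you pass to balls: the worst case is a ball of radius roughly $\abs{I_n}\cdot a^{-c^{n+1}}$ that contains all surviving level-$(n{+}1)$ cylinders inside $I_n$ while still carrying mass $\mu(I_n)$, giving $\log\mu(B_r)/\log r \to 1/(c+1)$. So your instinct that the ball-versus-cylinder comparison is where the work lies is correct, but the computation as written does not justify the stated limit.
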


Note that the sum $\log a_1(x)+\cdots+\log a_n(x)$ is considered as $\log (a_1(x)\cdots a_n(x))$, so we need to study the product of partial quotients.
Write $\Pi_n(x):=a_1(x)\cdots a_n(x)$. The following result of $\Pi_n(x)$ is analogous to Lemma \ref{Luc}.

\begin{lemma}\label{Pac}
Let $a, c \in (1,\infty)$. Then
\begin{align*}
\dim_{\rm H}&\left\{x\in (0,1): \Pi_n(x) \geq a^{c^n},\forall n\gg 1\right\} \\
=& \dim_{\rm H}\left\{x\in (0,1): \Pi_n(x) \geq a^{c^n},\text{i.m.}\ n \in \mathbb{N}\right\} = \frac{1}{c+1},
\end{align*}
where ``$\forall n\gg 1$" denotes ``for sufficiently large $n$".
\end{lemma}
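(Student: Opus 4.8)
The plan is to sandwich both sets between sets of the type appearing in Lemma~\ref{Luc}, exploiting the obvious inequality $\Pi_n(x) \ge a_n(x)$ together with a polynomial-type control of how much the earlier partial quotients can contribute. For the lower bounds, the inclusion
\[
\left\{x: a_n(x) \ge a^{c^n}, \forall n\ge 1\right\} \subseteq \left\{x: \Pi_n(x) \ge a^{c^n}, \forall n\gg 1\right\}
\]
is immediate since $\Pi_n(x)\ge a_n(x)$, and likewise for the ``i.m.'' versions; by Lemma~\ref{Luc} this already gives that both dimensions are $\ge \frac{1}{c+1}$. So the whole content is the upper bound $\dim_{\rm H}\{x:\Pi_n(x)\ge a^{c^n},\ \text{i.m.}\ n\} \le \frac{1}{c+1}$, which then dominates both sets in the statement since the ``$\forall n\gg 1$'' set is contained in the ``i.m.'' set.

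For the upper bound I would run a direct covering argument at level $n$. Fix $n$ with $\Pi_n(x)\ge a^{c^n}$, write $q_n(x)$ for the denominator of the $n$-th convergent, and recall the standard facts $|I_n(a_1,\dots,a_n)| \asymp q_n^{-2}$ and $q_n \le \prod_{k=1}^n (a_k+1) \le 2^n \Pi_n$, together with the lower bound $q_n \ge \Pi_n^{1/2}$ (or any clean comparison of this flavour). The number of cylinders $I_n(a_1,\dots,a_n)$ with prescribed product $\Pi_n = m$ is controlled by the divisor-type count $\#\{(a_1,\dots,a_n): a_1\cdots a_n = m\} \le m^{o(1)}$ as $m\to\infty$ (uniformly enough for our purposes, since we only need it to be subexponential in $\log m$ and sub-polynomial in $m$); more crudely one can bound the number of $n$-tuples with $a_1\cdots a_n \in [m, 2m)$ by $C^n m^{\varepsilon}$. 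Then for $s > \frac{1}{c+1}$ one estimates, for each fixed large $n$,
\[
\sum_{\Pi_n \ge a^{c^n}} |I_n|^s \ \lesssim\ \sum_{m \ge a^{c^n}} (\#\{\text{cylinders with }\Pi_n\asymp m\}) \cdot m^{-2s},
\]
and since $s>\frac12$ forces $2s > 1$, the extra $m^{\varepsilon}$ and $C^n$ factors are absorbed and the sum over $m\ge a^{c^n}$ decays; summing over $n\ge N$ and letting $N\to\infty$ gives the i.m.\ set (which is $\limsup_n$ of these cylinder unions) zero $s$-dimensional Hausdorff measure. Hence its dimension is $\le s$ for every $s>\frac{1}{c+1}$, i.e.\ $\le \frac{1}{c+1}$.

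The main obstacle is making the divisor-count bound do real work while keeping the exponential-in-$n$ overhead under control: the count of factorisations of $m$ into $n$ ordered factors is $\binom{\Omega(m)+n-1}{n-1}$-ish and can itself be as large as roughly $m^{1/\log\log m}$ but also carries a $C^n$-type factor when $n$ is large relative to $\log m$ --- and here $n$ and $\log m \sim c^n \log a$ are comparably exponential, so one has to be a little careful. The clean way around this, which I would adopt, is to not fix the full tuple but to split the product across a single ``large'' index: observe that if $\Pi_n \ge a^{c^n}$ then there exists $k\le n$ with $a_k \ge a^{c^n/n} \ge a^{(c-\delta)^n}$ for all large $n$, reducing the problem to $\bigcup_{k} \{a_k(x) \ge a^{(c-\delta)^n}\ \text{i.m.}\ n\}$; a countable union over $k$ (or rather a union over $k\le n$ handled uniformly) together with Lemma~\ref{Luc} applied with $c$ replaced by $c-\delta$ gives dimension $\le \frac{1}{c-\delta+1}$, and letting $\delta\to 0$ finishes it. This route sidesteps the divisor-function estimate entirely and is the argument I would actually write down; the displayed covering computation above is the fallback if one wants to avoid the reduction to a single coordinate.
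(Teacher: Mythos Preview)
Your overall strategy --- sandwich via \L uczak's lemma, lower bound from $\Pi_n\ge a_n$, upper bound by showing the ``i.m.'' product set is contained in $\{x: a_n(x)\ge a^{(c-\delta)^n}\ \text{i.m.}\ n\}$ --- is exactly the paper's approach. The paper's execution of the upper bound is a little cleaner than your pigeonhole: it argues by contrapositive that if $a_n(x)<a^{(c-\varepsilon)^n}$ for all $n>N$ then
\[
\Pi_n(x)\ <\ \Pi_N(x)\,a^{(c-\varepsilon)^{N+1}+\cdots+(c-\varepsilon)^n}\ <\ \Pi_N(x)\,a^{(c-\varepsilon)^{n+1}/(c-\varepsilon-1)}\ <\ a^{c^n}
\]
for all large $n$, which gives the inclusion $\{\Pi_n\ge a^{c^n}\ \text{i.m.}\}\subseteq\{a_n\ge a^{(c-\varepsilon)^n}\ \text{i.m.}\}$ in one line.

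Your ``clean way'' reaches the same inclusion, but the phrasing needs a small fix. As written, the set $\bigcup_k\{x:a_k(x)\ge a^{(c-\delta)^n}\ \text{i.m.}\ n\}$ with $k$ fixed and $n$ varying is empty, since that would force $a_k(x)=\infty$. The correct finish of your pigeonhole is: for each $n$ with $\Pi_n\ge a^{c^n}$ pick $k(n)\le n$ with $a_{k(n)}\ge a^{c^n/n}\ge a^{(c-\delta)^n}$; since each $a_k$ is finite, the values $k(n)$ must be unbounded, and $k(n)\le n$ gives $a_{k(n)}\ge a^{(c-\delta)^{k(n)}}$, so $a_m(x)\ge a^{(c-\delta)^m}$ for infinitely many $m$. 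This is precisely the \L uczak set with parameter $c-\delta$, and you conclude as you said. The divisor-count covering argument you sketched first is unnecessary and, as you noted, delicate; you were right to drop it.
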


\begin{proof}
It is sufficient to estimate the lower bound of the Hausdorff dimension of the first set and the upper bound for the second set.
Since $\Pi_n(x) \geq a_n(x)$, by Lemma \ref{Luc}, we deduce that
\[
\dim_{\rm H} \left\{x\in (0,1): \Pi_n(x) \geq a^{c^n},\forall n\gg 1\right\} \geq \frac{1}{c+1}.
\]
Let $0<\varepsilon <c-1$ be fixed. We claim that
\begin{equation}\label{subset}
\left\{x\in (0,1):\Pi_n(x) \geq a^{c^n},\text{i.m.}\ n \in \mathbb{N}\right\} \subseteq \left\{x\in (0,1): a_n(x) \geq a^{(c-\varepsilon)^n},\text{i.m.}\ n \in \mathbb{N}\right\}.
\end{equation}
In fact, if there exists $N:=N(x,\varepsilon)\in \mathbb{N}$ such that $a_n(x) < a^{(c-\varepsilon)^n}$ for all $n > N$, then
\[
\Pi_n(x) <\Pi_N(x)a^{(c-\varepsilon)^{N+1}+\cdots+(c-\varepsilon)^n}<\Pi_N(x)a^{\frac{(c-\varepsilon)^{n+1}}{c-\varepsilon-1}},
\]
and hence $\Pi_n(x) <a^{c^n}$ for sufficiently large $n$.
It follows from \eqref{subset} and Lemma \ref{Luc} that
\[
\dim_{\rm H}\left\{x\in (0,1): \Pi_n(x) \geq a^{c^n},\text{i.m.}\ n \in \mathbb{N}\right\} \leq \frac{1}{c-\varepsilon+1}.
\]
Letting $\varepsilon \to 0^+$, we obtain the desired upper bound.
\end{proof}

We point out that $K(\infty)=1/2$, see \cite[Theorem 1.2]{FLWW09} and \cite[Theorem 7.1]{IJ}. However,
the following lemma shows that the set of points for which the limsup of their geometric averages is infinity is also of Hausdorff dimension $1/2$.

\begin{lemma}\label{P}
Write
\[
\Pi_\infty:=\left\{x\in (0,1): \limsup_{n\to\infty}\frac{\log \Pi_n(x)}{n}=\infty\right\}.
\]
Then
\[
\dim_{\rm H}\Pi_\infty =\frac{1}{2}.
\]
\end{lemma}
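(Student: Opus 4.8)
The plan is to sandwich $\dim_{\rm H}\Pi_\infty$ between $1/2$ and $1/2$ using the two directions separately. For the lower bound, I would observe that $\Pi_\infty$ contains the set of $x$ for which $\Pi_n(x) \geq a^{c^n}$ holds for infinitely many $n$, for any fixed $a, c \in (1,\infty)$: indeed, if $\log \Pi_{n_k}(x) \geq c^{n_k}\log a$ along a subsequence, then $\frac{\log \Pi_{n_k}(x)}{n_k} \geq \frac{c^{n_k}\log a}{n_k} \to \infty$ since $c>1$, so the limsup is $\infty$. By Lemma \ref{Pac}, that smaller set has Hausdorff dimension $\frac{1}{c+1}$, and letting $c \to 1^+$ gives $\dim_{\rm H}\Pi_\infty \geq \frac{1}{2}$.

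For the upper bound I would fix an arbitrary $M>0$ and cover $\Pi_\infty$ by the sets $F_N := \{x : \log \Pi_n(x) \geq Mn \text{ for some } n \geq N\}$, noting $\Pi_\infty \subseteq \bigcap_N F_N \subseteq \bigcup_{n \geq N} \{x : \Pi_n(x) \geq e^{Mn}\}$ for every $N$. The set $\{x : \Pi_n(x) \geq e^{Mn}\}$ is a union of basic cylinders of rank $n$, namely those $(a_1,\dots,a_n)$ with $a_1\cdots a_n \geq e^{Mn}$, each of diameter comparable to $(a_1\cdots a_n)^{-2}\leq e^{-2Mn}$. The number of such cylinders can be controlled by a standard counting estimate: the number of $n$-tuples of positive integers with product at least $T$ that actually occurs among partial quotients, weighted by the $s$-th power of the cylinder length, is summable once $s > 1/2$, because $\sum_{a\geq 1} a^{-2s} < \infty$ for $s > 1/2$ and the product structure turns the $s$-dimensional Hausdorff sum over rank-$n$ cylinders into $\big(\sum_{a\geq1} a^{-2s}\big)^n$ up to the usual continued-fraction distortion constants. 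Restricting to cylinders with $a_1\cdots a_n \geq e^{Mn}$ only makes this smaller, so $\sum_{n\geq N}\sum_{a_1\cdots a_n\geq e^{Mn}} |I(a_1,\dots,a_n)|^s \to 0$ as $N\to\infty$ for any $s>1/2$; hence $\mathcal{H}^s(\Pi_\infty)=0$ and $\dim_{\rm H}\Pi_\infty \leq 1/2$.

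Actually a cleaner route for the upper bound, which avoids re-deriving a counting lemma, is to mimic the containment trick from the proof of Lemma \ref{Pac} in reverse. For fixed $\varepsilon>0$ and any $c>1$, if $a_n(x) < a^{(c+\varepsilon)^n}$ eventually ... no, this goes the wrong way; instead one checks directly that $\Pi_\infty \subseteq \bigcup_{j\geq 1}\{x : a_n(x) \geq 2^n \text{ i.m. } n\} \cup (\text{dimension } \leq 1/2 \text{ remainder})$ is not quite automatic, so I will stick with the direct covering argument above, using the known fact (e.g. from \cite{WW, JM}) that for any function $f(n)\to\infty$ the set $\{x : a_1(x)\cdots a_n(x) \geq f(n)^n \text{ i.m. } n\}$ has dimension $\leq 1/2$, applied with $f(n)\equiv e^M$. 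The main obstacle is the upper bound: one must make the cylinder-counting estimate precise, keeping track of the bounded distortion constants of continued fractions so that the $s$-sum over rank-$n$ cylinders really does behave like $\big(\sum_a a^{-2s}\big)^n$, and verifying that the extra constraint $\Pi_n(x)\geq e^{Mn}$ does not inflate the count. Once that estimate is in hand, both inequalities follow by letting the auxiliary parameters tend to their limits.
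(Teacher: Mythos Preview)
Your lower bound is the same as the paper's (the paper uses the ``eventually'' set rather than the ``infinitely often'' set, but Lemma~\ref{Pac} gives both the same dimension, so this is immaterial).

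The upper bound sketch has a genuine gap. You assert that the $s$-dimensional Hausdorff sum over rank-$n$ cylinders is essentially $\bigl(\sum_{a\ge1}a^{-2s}\bigr)^n$ and then say ``restricting to cylinders with $a_1\cdots a_n\ge e^{Mn}$ only makes this smaller, so $\sum_{n\ge N}\sum_{a_1\cdots a_n\ge e^{Mn}}|I(a_1,\dots,a_n)|^s\to0$.'' But for $s=\tfrac12+\varepsilon$ one has $\sum_{a\ge1}a^{-2s}=\sum_{a\ge1}a^{-(1+2\varepsilon)}>1$, so the unrestricted double sum diverges; knowing the inner sum is merely \emph{smaller} than a divergent series proves nothing. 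The constraint must be used quantitatively, not discarded. The paper does exactly this: write $(\sigma_1\cdots\sigma_n)^{-(1+2\varepsilon)}=(\sigma_1\cdots\sigma_n)^{-\varepsilon}(\sigma_1\cdots\sigma_n)^{-(1+\varepsilon)}$, bound the first factor by $K^{-\varepsilon n}$ using the constraint $\sigma_1\cdots\sigma_n\ge K^n$, and sum the second factor freely to get $M_\varepsilon^n$ with $M_\varepsilon:=\sum_j j^{-(1+\varepsilon)}$. This forces $K$ to be chosen large enough (depending on $\varepsilon$) that $K^\varepsilon>2M_\varepsilon$, which is precisely the point you miss when you fix $M$ first and treat it as arbitrary.

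Your fallback route also fails as stated: you invoke a result for ``any function $f(n)\to\infty$'' and then apply it with $f(n)\equiv e^M$, which is constant. In fact the set $\{x:\Pi_n(x)\ge e^{Mn}\ \text{i.m.}\ n\}$ has full Lebesgue measure for every fixed $M$ (by Khintchine's theorem $\frac{1}{n}\log\Pi_n(x)\to\log c$ a.e., and one can choose $M<\log c$), so no dimension bound comes from a single $M$; you must let $M\to\infty$, and that is exactly where the dependence of $K$ on $\varepsilon$ enters in the paper's argument.
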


\begin{proof}
For the lower bound, since
\[
\Pi_\infty  \supseteq \left\{x\in (0,1): \Pi_n(x) \geq a^{c^n},\forall n\gg 1\right\},
\]
by Lemma \ref{Pac}, we see that $\dim_{\rm H}\Pi_\infty  \geq 1/(c+1)$, and so $\dim_{\rm H}\Pi_\infty  \geq 1/2$.

For the upper bound, let $0<\varepsilon <1$ and $s:= 1/2 +\varepsilon$. Choose a sufficiently large number $K>1$ such that
\[
K^\varepsilon > 2 M_\varepsilon
\]
where $M_\varepsilon$ is defined as $M_\varepsilon:= \sum_{j\geq 1}j^{-(1+\varepsilon)}$. For $(\sigma_1,\cdots,\sigma_n) \in \mathbb{N}^n$, the set
\[
I_n(\sigma_1,\cdots,\sigma_n):=\big\{x\in (0,1): a_k(x)=\sigma_k\ \text{for all}\ 1\leq k \leq n\big\}
\]
is called a \emph{cylinder} of order $n$ associated to $(\sigma_1,\cdots,\sigma_n)$. It was shown in \cite[p.\,18]{IK02} that $I_n(\sigma_1,\cdots,\sigma_n)$ is an interval and
its length satisfies
\begin{equation}\label{length}
|I_n(\sigma_1,\cdots,\sigma_n)| \leq \frac{1}{(\sigma_1 \cdots \sigma_n)^2}.
\end{equation}
Denote by $\mathcal{H}^s(\cdot)$ the $s$-dimensional Hausdorff measure. Remark that $\Pi_\infty$ is covered by
\begin{align*}
\bigcap^\infty_{N=1} \bigcup^\infty_{n=N} \bigcup_{(\sigma_1,\cdots,\sigma_n)\in \mathcal{C}_n(K)} I_n(\sigma_1,\cdots,\sigma_n),
\end{align*}
where $\mathcal{C}_n(K)$ is given by $\mathcal{C}_n(K):= \{(\sigma_1,\cdots,\sigma_n) \in \mathbb{N}^n: \sigma_1 \cdots\sigma_n \geq K^n\}$.
By \eqref{length}, we conclude that
\begin{align*}
\mathcal{H}^s(\Pi_\infty) &\leq \liminf_{N\to \infty} \sum^\infty_{n =N} \sum_{(\sigma_1,\cdots,\sigma_n)\in \mathcal{C}_n(K)} |I_n(\sigma_1,\cdots,\sigma_n)|^s\\
& \leq \liminf_{N\to \infty} \sum^\infty_{n =N} \sum_{(\sigma_1,\cdots,\sigma_n)\in \mathcal{C}_n(K)} \frac{1}{(\sigma_1 \cdots \sigma_n)^{1+2\varepsilon}}\\
&\leq \liminf_{N\to \infty} \sum^\infty_{n =N} \frac{1}{K^{\varepsilon n}}\sum_{(\sigma_1,\cdots,\sigma_n)\in \mathbb{N}^n} \frac{1}{(\sigma_1 \cdots \sigma_n)^{1+\varepsilon}}\\
& \leq \liminf_{N\to \infty} \sum^\infty_{n =N} \left(\frac{M_\varepsilon}{K^{\varepsilon}}\right)^n\\
& \leq \liminf_{N\to \infty} \sum^\infty_{n =N} \frac{1}{2^n} =0,
\end{align*}
which implies that $\dim_{\rm H}\Pi_\infty \leq s$. Since $\varepsilon$ is arbitrary, we obtain $\dim_{\rm H}\Pi_\infty \leq 1/2$. The proof is complete.
\end{proof}

The following result, given by Fan et al. \cite{FLWW09, FLWW13}, provides a powerful method for estimating the lower bound of the Hausdorff dimension of certain sets arising in continued fraction expansions. See Liao and Rams \cite{LR19} for a general result in the setting of infinite iterated function systems.

\begin{lemma}[\cite{FLWW09, FLWW13}]\label{2S}
Let $\{s_n\}$ be a sequence of real numbers with $s_n \geq 1$ for all $n \geq 1$ and
\begin{equation}\label{sinfty}
\lim_{n \to \infty}\frac{\sum^n_{k=1}\log s_k}{n}=\infty.
\end{equation}
Write
\[
\mathbb{E}(\{s_n\}):= \big\{x\in (0,1): s_n \leq a_n(x)\leq 2s_n, \forall n \geq 1\big\}.
\]
Then
\[
\dim_{\rm H}\mathbb{E}(\{s_n\}) = \left(2+\limsup_{n \to \infty}\frac{\log s_{n+1}}{\log s_1+\cdots+\log s_n}\right)^{-1}.
\]
\end{lemma}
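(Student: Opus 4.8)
The plan is to prove the two inequalities $\dim_{\rm H}\mathbb{E}(\{s_n\})\le(2+t)^{-1}$ and $\dim_{\rm H}\mathbb{E}(\{s_n\})\ge(2+t)^{-1}$ separately, where throughout I write $L_n:=\log s_1+\cdots+\log s_n$ and $t:=\limsup_{n\to\infty}(\log s_{n+1})/L_n\in[0,\infty]$. Two elementary facts about cylinders are used repeatedly: $|I_n(\sigma_1,\dots,\sigma_n)|$ is of order $q_n^{-2}$, where $q_n$ is the denominator of the $n$-th convergent and $\sigma_1\cdots\sigma_n\le q_n\le 2^n\sigma_1\cdots\sigma_n$; and, on $\mathbb{E}(\{s_n\})$, $\log q_n=L_n+O(n)$ since $s_k\le a_k(x)\le 2s_k$. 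Hypothesis \eqref{sinfty}, namely $L_n/n\to\infty$, is exactly what makes all these $O(n)$ errors negligible against $L_n$ in every estimate below.

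For the upper bound I would cover $\mathbb{E}(\{s_n\})$, for each $n$, not by the order-$n$ cylinders it meets but by the slightly smaller sets $I_n(\sigma_1,\dots,\sigma_n)\cap\{x:a_{n+1}(x)\ge s_{n+1}\}$ with $s_k\le\sigma_k\le 2s_k$ for $1\le k\le n$. Being the union of the cylinders $I_{n+1}(\sigma_1,\dots,\sigma_n,j)$ over $j\ge s_{n+1}$, such a set has diameter at most a constant times $\big(s_{n+1}(\sigma_1\cdots\sigma_n)^2\big)^{-1}$; the gain of the factor $s_{n+1}^{-1}$ from this ``one-step look-ahead'' is the key point, and it is what makes the value $(2+t)^{-1}$ (rather than merely $1/2$) accessible by a covering argument. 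For $s>1/2$ one has $\sum_{s_k\le j\le 2s_k}j^{-2s}\le 2s_k^{\,1-2s}$, so the sum of the $s$-th powers of the diameters at level $n$ has logarithm at most $O(n)+L_n\big(1-2s-s(\log s_{n+1})/L_n\big)$. Choosing a subsequence along which $(\log s_{n+1})/L_n\to t$ and using $L_n/n\to\infty$, this tends to $-\infty$ as soon as $1-2s-st<0$, i.e.\ $s>(2+t)^{-1}$; since the diameters tend to $0$, we get $\mathcal{H}^s(\mathbb{E}(\{s_n\}))=0$ for every such $s$, hence $\dim_{\rm H}\mathbb{E}(\{s_n\})\le(2+t)^{-1}$. (When $t=\infty$ the same computation yields dimension $0$, and when $t=0$ the cruder covering by the cylinders themselves already suffices.)

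For the lower bound, which is only needed when $t<\infty$, I would put on $\mathbb{E}(\{s_n\})$ the Bernoulli-type probability measure $\mu$ assigning each admissible cylinder $I_n(\sigma_1,\dots,\sigma_n)$ the mass $\prod_{k=1}^n N_k^{-1}$, where $N_k:=\#\{j\in\mathbb{N}:s_k\le j\le 2s_k\}$ satisfies $\tfrac12 s_k\le N_k\le 2s_k$, so that $\log\mu(I_n(a_1(x),\dots,a_n(x)))=-L_n+O(n)$ on $\mathbb{E}(\{s_n\})$. Fix $x\in\mathbb{E}(\{s_n\})$ and set $I_n:=I_n(a_1(x),\dots,a_n(x))$; inside $I_n$ the admissible children $I_{n+1}(a_1(x),\dots,a_n(x),j)$, $s_{n+1}\le j\le 2s_{n+1}$, fill an interval $T_n$ of length of order $(s_{n+1}q_n^2)^{-1}$, have comparable lengths of order $(s_{n+1}^2q_n^2)^{-1}$, and each carries mass $\mu(I_n)/N_{n+1}$. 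For small $r$ there is a unique $n$ with $|T_n|\le r<|I_n|$ or with $|I_{n+1}|\le r<|T_n|$; in the first case $\mu(B(x,r))$ is at most a constant times $\mu(I_n)$, and in the second it is at most a constant times $(r/|I_{n+1}|)\cdot\mu(I_n)/N_{n+1}$, obtained by counting the admissible children of $I_n$ (and of its $O(1)$ neighbouring cylinders) that $B(x,r)$ can meet. Substituting the orders of $|I_n|$, $|T_n|$, $|I_{n+1}|$ and $\mu(I_n)$, both cases give $(\log\mu(B(x,r)))/\log r\ge\big(2+(\log s_{n+1})/L_n\big)^{-1}-o(1)$ as $r\to0$, so $\liminf_{r\to0}(\log\mu(B(x,r)))/\log r\ge(2+t)^{-1}$ for every $x\in\mathbb{E}(\{s_n\})$; the mass distribution principle then gives $\dim_{\rm H}\mathbb{E}(\{s_n\})\ge(2+t)^{-1}$.

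The step I expect to be the main obstacle is the second regime of the lower bound, where $r$ lies between the size of a single admissible child and the size of the cluster $T_n$: here the crude estimate $\mu(B(x,r))\le\mu(I_n)$ only produces the exponent $(2+2t)^{-1}$, so one genuinely needs the refined counting bound, together with a careful check that the resulting ratio stays above $(2+t)^{-1}$ at \emph{every} small scale and not merely at the critical scale $r$ of order $(s_{n+1}q_n^2)^{-1}$ at which it is attained. Keeping the accumulated $O(n)$ errors under control — they arise because $N_k$ is only comparable (not equal) to $s_k$ and $q_n$ only comparable to $a_1(x)\cdots a_n(x)$, each up to a bounded factor per coordinate — is exactly where \eqref{sinfty} is indispensable, and it is also the reason the restrictions $b,B>1$ of Liao and Rams can be removed.
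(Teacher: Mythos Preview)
The paper does not prove Lemma~\ref{2S}; it is quoted from \cite{FLWW09,FLWW13} and used as a black box, so there is no in-paper argument to compare against. Your outline is the standard proof from those references: the one-step look-ahead cover $I_n(\sigma_1,\dots,\sigma_n)\cap\{a_{n+1}\ge s_{n+1}\}$ for the upper bound, and a uniformly distributed mass on admissible cylinders together with the mass distribution principle for the lower bound. The computations you sketch are correct, and you have located the genuine technical point (the intermediate regime $|I_{n+1}|\le r<|T_n|$, where the naive bound $\mu(B(x,r))\le\mu(I_n)$ is insufficient).

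One remark on a detail you gloss over. In the first regime $|T_n|\le r<|I_n|$ you write $\mu(B(x,r))\le C\mu(I_n)$; justifying this requires knowing that $B(x,r)$ meets only boundedly many admissible order-$n$ cylinders, which is not automatic because $r<|I_n|$ does not by itself keep $B(x,r)$ inside $I_n$. The usual fix is to observe that the gap between the support of $\mu$ inside $I_n$ and the nearer endpoint of $I_n$ is of order $|T_n|$ (coming from the tail $\sum_{j>2s_{n+1}}|I_{n+1}(\cdot,j)|$), while the gap to the farther endpoint is of order $|I_n|$; hence a ball of radius $r<|I_n|$ centred in the support can reach at most one neighbouring order-$n$ cylinder, and that neighbour carries comparable mass. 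This is routine but should be stated explicitly; once it is, your argument goes through and reproduces the result of \cite{FLWW09,FLWW13}.
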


In what follows, we will calculate the Hausdorff dimensions of $\overline{E}(\psi)$ and $\underline{E}(\psi)$ respectively.

\subsection{Hausdorff dimension of $\overline{E}(\psi)$}
Let $\varphi, \varphi^\prime: \mathbb{N} \to \mathbb{R}^+$ be functions. We say that $\varphi$ is \emph{limsup-equivalent} to $\varphi^\prime$ if
\[
\limsup_{n \to \infty} \frac{\varphi^\prime(n)}{\varphi(n)}=1.
\]
Similarly, $\varphi$ is said to be \emph{liminf-equivalent} to $\varphi^\prime$ if limsup is replaced by liminf. Note that $\varphi$ is liminf-equivalent to $\varphi^\prime$ if and only if $\varphi^\prime$ is limsup-equivalent to $\varphi$.

Recall that $\psi:\mathbb{N} \to \mathbb{R}^+$ is a function satisfying $\psi(n)/n \to \infty$ as $n \to \infty$ (without any monotonicity).
We will show that $\overline{E}(\psi)$ is always non-empty. To this end, we first give a necessary and sufficient condition for $E(\psi, \alpha)$ to be non-empty.

\begin{lemma}\label{overiff}
$\overline{E}(\psi)$ is non-empty if and only if $\psi$ is limsup-equivalent to a non-decreasing function.
\end{lemma}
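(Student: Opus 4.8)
The plan is to prove the two implications separately, exploiting two elementary facts: the partial sums $S_n(x):=\log a_1(x)+\cdots+\log a_n(x)=\log\Pi_n(x)$ form a \emph{non-decreasing} sequence in $n$ (since each $a_k(x)\ge 1$), and $\psi(n)\to\infty$ (because $\psi(n)/n\to\infty$).

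For the necessity, I would start from an arbitrary $x\in\overline{E}(\psi)$ and exhibit a non-decreasing function to which $\psi$ is limsup-equivalent. The obvious candidate is $\varphi(n):=S_n(x)+1$. It is $\mathbb{R}^+$-valued and non-decreasing, and since $1/\psi(n)\to 0$ one gets $\limsup_n \varphi(n)/\psi(n)=\limsup_n S_n(x)/\psi(n)=1$, i.e.\ $\psi$ is limsup-equivalent to $\varphi$. (If one wishes, one can first record that $S_n(x)\to\infty$: the limsup of $S_n(x)/\psi(n)$ being $1$ together with $\psi(n)\to\infty$ forces $\limsup_n S_n(x)=\infty$, and monotonicity turns this into a genuine limit; but this observation is not strictly needed.)

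For the sufficiency, suppose $\psi$ is limsup-equivalent to a non-decreasing $\varphi:\mathbb{N}\to\mathbb{R}^+$, so $\limsup_n\varphi(n)/\psi(n)=1$. Put $\varphi(0):=0$, set $d_n:=\varphi(n)-\varphi(n-1)\ge 0$, and let $x$ be the (irrational) number in $(0,1)$ whose continued fraction expansion has partial quotients $a_n:=\lceil e^{d_n}\rceil\in\mathbb{N}$. From $1\le\lceil t\rceil\le 2t$ for $t\ge 1$ we get $d_n\le\log a_n\le d_n+\log 2$, and telescoping gives
\[
\varphi(n)\ \le\ S_n(x)\ \le\ \varphi(n)+n\log 2 \qquad (n\ge 1).
\]
The left inequality yields $\limsup_n S_n(x)/\psi(n)\ge\limsup_n\varphi(n)/\psi(n)=1$; the right inequality, together with $n\log 2=o(\psi(n))$ (this is exactly where $\psi(n)/n\to\infty$ enters), yields $\limsup_n S_n(x)/\psi(n)\le\limsup_n\varphi(n)/\psi(n)=1$. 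Hence $\limsup_n S_n(x)/\psi(n)=1$, i.e.\ $x\in\overline{E}(\psi)$, so the set is non-empty.

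I do not expect a serious obstacle; the argument is essentially bookkeeping with upper limits. The only points needing a little care are: (i) ruling out the degenerate case in which the candidate monotone function is bounded — but a bounded function cannot have limsup-ratio $1$ against $\psi$ since $\psi(n)\to\infty$, so this never occurs under the hypothesis; and (ii) verifying that the $O(n)$ error incurred when rounding the real increments $d_n$ up to integer partial quotients is absorbed by $\psi$, which is precisely the content of $\psi(n)/n\to\infty$. The conceptual point is just that the monotone function witnessing the equivalence can always be taken to be $S_n(x)$ itself, for any $x$ in the set.
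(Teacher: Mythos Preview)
Your proof is correct and, for the ``only if'' direction, identical to the paper's. For the ``if'' direction the overall strategy is the same---define the partial quotients of a point $x$ by exponentiating the increments of the given non-decreasing function and rounding to an integer---but your execution is slightly cleaner than the paper's. The paper first pads $\phi$ to an auxiliary $\widehat{\phi}(n):=\phi(n)+\log(\alpha_1\cdots\alpha_n)$ with $\alpha_n\to\infty$ chosen so that $\widehat{\phi}(n)/n\to\infty$; this guarantees that the $O(n)$ rounding error is $o(\widehat{\phi}(n))$, giving a genuine limit $S_n(x)/\widehat{\phi}(n)\to 1$, which is then combined with $\limsup \widehat{\phi}(n)/\psi(n)=1$. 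You bypass this detour by comparing $S_n(x)$ to $\psi(n)$ directly, using that the $O(n)$ rounding error is already $o(\psi(n))$ by the standing hypothesis $\psi(n)/n\to\infty$. Both routes work; yours avoids the auxiliary sequence $\{\alpha_n\}$ at the cost of only obtaining the $\limsup$ (which is all that is needed here), while the paper's construction yields the stronger conclusion that $S_n(x)/\widehat{\phi}(n)$ actually converges.
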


\begin{proof}
For the ``only if" part, we assume that $\overline{E}(\psi)$ is non-empty. Take $x\in \overline{E}(\psi)$, namely,
\[
\limsup_{n \to \infty} \frac{\log a_1(x)+\cdots+\log a_n(x)}{\psi(n)} =1,
\]
and define $\phi: \mathbb{N} \to \mathbb{R}^+$ as $\phi(n):=\log a_1(x)+\cdots+\log a_n(x)+1$. Then $\phi: \mathbb{N} \to \mathbb{R}^+$ is non-decreasing and
\[
\limsup_{n \to \infty} \frac{\phi(n)}{\psi(n)} =1,
\]
which means that $\psi$ is limsup-equivalent to the non-decreasing function $\phi$.

For the ``if" part, we assume that $\psi$ is limsup-equivalent to a non-decreasing function $\phi: \mathbb{N} \to \mathbb{R}^+$.
Then
\begin{equation}\label{overlimsup}
\limsup_{n \to \infty} \frac{\phi(n)}{\psi(n)}=1.
\end{equation}
Since $\psi(n)/n \to \infty$ as $n \to \infty$, we can find a strictly increasing sequence $\{n_k\}$ such that for each $k \geq 1$,
\[
\frac{\psi(n)}{n} \geq k^2, \ \ \forall n \geq n_k.
\]
Let $\alpha_n: =k+1$ if $n_k \leq n <n_{k+1}$ with the convention $n_0\equiv1$. Then $\alpha_n \to \infty$ as $n \to \infty$ and
\begin{equation}\label{alphan}
\lim_{n \to \infty} \frac{\log\alpha_1 +\cdots+\log \alpha_n}{\psi(n)} = 0.
\end{equation}
Now define a new function $\widehat{\phi}: \mathbb{N} \to \mathbb{R}^+$ as $\widehat{\phi}(n):=\phi(n)+\log(\alpha_1\cdots \alpha_n)$. Then $\widehat{\phi}$ is non-decreasing and $\widehat{\phi}(n)/n \to \infty$ as $n \to \infty$.
Define a point $x\in (0,1)$ as
\[
\widehat{x}:=[a_1,a_2,\cdots,a_n,\cdots] \ \ \ \text{with}\ a_n= \lfloor e^{\widehat{\phi}(n)-\widehat{\phi}(n-1)}\rfloor,
\]
with the convention $\widehat{\phi}(0)=0$. Then $a_n(\widehat{x})=a_n$ and hence
\[
\lim_{n \to \infty} \frac{\log a_1(x)+\cdots+\log a_n(x)}{\widehat{\phi}(n)} =1
\]
since $\widehat{\phi}(n)/n \to \infty$ as $n \to \infty$. Combining this with \eqref{overlimsup} and \eqref{alphan}, we see that $\widehat{x}\in \overline{E}(\psi)$, and so $\overline{E}(\psi)$ is non-empty.
\end{proof}

\begin{lemma}\label{overnon-empty}
$\overline{E}(\psi)$ is always non-empty.
\end{lemma}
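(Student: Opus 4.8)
The plan is to reduce the statement to Lemma~\ref{overiff} and then construct by hand a non-decreasing function that is limsup-equivalent to $\psi$. Thus, given an arbitrary $\psi:\mathbb{N}\to\mathbb{R}^+$ with $\psi(n)/n\to\infty$, it suffices to produce a non-decreasing $\phi:\mathbb{N}\to\mathbb{R}^+$ with $\limsup_{n\to\infty}\phi(n)/\psi(n)=1$; Lemma~\ref{overiff} will then give that $\overline{E}(\psi)$ is non-empty.

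First I would observe that $\psi(n)/n\to\infty$ forces $\psi(n)\to\infty$. The device driving the construction is the set $F$ of \emph{future-minimal indices}, namely those $m$ for which $\psi(m)\le\psi(k)$ for all $k\ge m$. I would show $F$ is infinite as follows: for any $N$, since $\psi(k)\to\infty$ one may choose $M\ge N$ with $\psi(k)>\psi(N)$ for all $k>M$, so that $\inf_{m\ge N}\psi(m)$ coincides with $\min_{N\le m\le M}\psi(m)$ and is attained at some $m_0\in[N,M]$; by construction $m_0\in F$ and $m_0\ge N$, so $F$ is unbounded, hence infinite.

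Next, enumerate $F=\{m_1<m_2<\cdots\}$. Future-minimality of $m_j$ together with $m_{j+1}>m_j$ gives $\psi(m_1)\le\psi(m_2)\le\cdots$, so putting $\phi(n):=\psi(m_1)$ for $n<m_1$ and $\phi(n):=\psi(m_j)$ for $m_j\le n<m_{j+1}$ defines a non-decreasing, strictly positive function. For $n\ge m_1$, let $m_j$ denote the largest element of $F$ with $m_j\le n$; future-minimality of $m_j$ yields $\phi(n)=\psi(m_j)\le\psi(n)$, hence $\phi(n)/\psi(n)\le 1$ for all such $n$, while $\phi(m_j)/\psi(m_j)=1$ for every $j$. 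Consequently $\limsup_{n\to\infty}\phi(n)/\psi(n)=1$, i.e.\ $\psi$ is limsup-equivalent to the non-decreasing function $\phi$, and the conclusion follows from Lemma~\ref{overiff}. Note that no growth condition on $\phi$ is needed here, since the ``if'' part of Lemma~\ref{overiff} supplies the required superlinear growth separately.

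The only point requiring care is the infinitude of $F$: because $\psi$ is merely real-valued, an infimum over an infinite index set need not be attained, and it is precisely the hypothesis $\psi(n)\to\infty$ that allows one to replace each such infimum by a minimum over a finite block. The remaining steps are routine bookkeeping.
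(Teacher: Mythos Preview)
Your proof is correct and follows essentially the same approach as the paper: reduce to Lemma~\ref{overiff} and exhibit a non-decreasing function that agrees with $\psi$ on an infinite set of ``future-minimal'' indices. The paper's construction is the slightly slicker $\phi(n):=\min_{k\ge n}\psi(k)$ (whose equality set with $\psi$ is exactly your set $F$), while you build a step function along $F$ and are a bit more explicit about why the minimum is actually attained; the substance is the same.
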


\begin{proof}
By Lemma \ref{overiff}, it is sufficient to show that $\psi$ is limsup-equivalent to a non-decreasing function. Indeed,
let $\phi(n):=\min_{k \geq n}\{\psi(k)\}$. Then $\phi$ is non-decreasing, $\phi(n) \to \infty$ as $n \to \infty$, $\phi(n) \leq \psi(n)$ and so
\[
\limsup_{n \to \infty} \frac{\phi(n)}{\psi(n)} \leq 1.
\]
In addition, we also obtain a useful observation:
\[
\phi(n) \neq \psi(n) \ \  \Longrightarrow \ \ \phi(n) = \min_{k \geq n}\{\psi(k)\} = \min_{k \geq n+1}\{\psi(k)\} = \phi(n+1).
\]
We claim that
\[
\phi(n) = \psi(n), \  \ \text{i.m.}\ n \in \mathbb{N}.
\]
In fact, if there exists $N \in \mathbb{N}$ such that $\phi(n) \neq \psi(n)$, i.e., $\phi(n) < \psi(n)$ for all $n \geq N$, then
\[
\phi(N)=\phi(N+1), \ \phi(N+1)=\phi(N+2),\cdots,
\]
which is in contradiction with the fact that $\phi(n)\to \infty$ as $n \to \infty$. Hence
\[
\limsup_{n \to \infty} \frac{\phi(n)}{\psi(n)} \geq 1.
\]
Therefore, $\psi$ is limsup-equivalent to the non-decreasing function $\phi$.
\end{proof}

Now we are ready to give the proof of Theorem \ref{OU} for the case $\overline{E}(\psi)$, which is divided into two parts: the upper bound and the lower bound of $\dim_{\rm H}\overline{E}(\psi)$.

{\bf Upper bound:}
For any $0<\varepsilon <1$, we have
\begin{equation}\label{oversubsetupper}
\overline{E}(\psi) \subseteq \left\{x\in (0,1): \Pi_n(x)\geq e^{(1-\varepsilon)\psi(n)},\ \text{i.m.}\ n \in \mathbb{N}\right\}.
\end{equation}
This leads to study the Hausdorff dimension of the limsup set.

\begin{lemma}\label{F}
Let $A \in (1,\infty)$. Write
\[
F(\psi):= \left\{x\in (0,1): \Pi_n(x)\geq A^{\psi(n)},\ \text{i.m.}\ n \in \mathbb{N}\right\}.
\]
Then
\[
\dim_{\rm H}F(\psi) = \frac{1}{b+1},
\]
where $b \in [1,\infty]$ is defined as in Theorem \ref{OU}.

\end{lemma}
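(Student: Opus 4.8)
The plan is to pin down $\dim_{\rm H} F(\psi)$ by separate upper and lower estimates, in each case converting the product barrier $\Pi_n(x) \ge A^{\psi(n)}$ into a condition handled directly by Lemma \ref{Pac} or Lemma \ref{2S}. The guiding observation is that $b$ reflects the size of $\psi$ only along the subsequence realizing $\liminf_n \frac{\log\psi(n)}{n} = \log b$, where $\psi(n)$ is roughly $b^n$; off this subsequence $\psi$ may be enormous, but this is irrelevant both to membership in $F(\psi)$ and to the constructions below, so a single geometric growth rate close to $b$ will govern everything.

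For the upper bound I would split on whether $b = 1$. When $b = 1$, I note that $x \in F(\psi)$ forces $\frac{\log \Pi_n(x)}{n} \ge \frac{\psi(n)}{n}\log A \to \infty$ along the witnessing subsequence, so $F(\psi) \subseteq \Pi_\infty$ and Lemma \ref{P} gives $\dim_{\rm H} F(\psi) \le 1/2 = 1/(b+1)$. When $b > 1$, fix any $b' \in (1, b)$; since $\liminf_n \frac{\log\psi(n)}{n} = \log b > \log b'$ we get $\psi(n) > (b')^n$ for all large $n$, hence $F(\psi) \subseteq \{x : \Pi_n(x) \ge A^{(b')^n},\ \text{i.m.}\ n\}$, whose dimension is $1/(b'+1)$ by Lemma \ref{Pac} (applied with $a = A$, $c = b'$). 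Letting $b' \to b$ yields $\dim_{\rm H} F(\psi) \le 1/(b+1)$, which in the case $b = \infty$ reads $\dim_{\rm H} F(\psi) = 0$.

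For the lower bound, the case $b = \infty$ is trivial, so assume $b < \infty$ and fix $\varepsilon > 0$, putting $c := b e^{2\varepsilon}$. By definition of $\liminf$ there are infinitely many $n$ with $\psi(n) < (b e^\varepsilon)^n$, and for such $n$ sufficiently large one has $c^n - 1 \ge (b e^\varepsilon)^n > \psi(n)$. I would then apply Lemma \ref{2S} to the sequence $s_n := A^{c^{n-1}(c-1)}$: these satisfy $s_n \ge 1$ and $\sum_{k=1}^n \log s_k = (c^n - 1)\log A$, so \eqref{sinfty} holds, and any $x \in \mathbb{E}(\{s_n\})$ has $\Pi_n(x) \ge \prod_{k=1}^n s_k = A^{c^n - 1} \ge A^{\psi(n)}$ for those infinitely many $n$, that is $\mathbb{E}(\{s_n\}) \subseteq F(\psi)$. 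Since $\frac{\log s_{n+1}}{\log s_1 + \cdots + \log s_n} = \frac{c^n(c-1)}{c^n - 1} \to c - 1$, Lemma \ref{2S} gives $\dim_{\rm H}\mathbb{E}(\{s_n\}) = 1/(c+1)$, so $\dim_{\rm H} F(\psi) \ge 1/(b e^{2\varepsilon} + 1)$; letting $\varepsilon \to 0^+$ completes the argument.

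I expect the only genuinely delicate point to be the choice of $\{s_n\}$ in the lower bound. The naive idea of making $\log \Pi_n$ track $\psi(n)$ at every $n$ would be controlled by the large off-subsequence values of $\psi$ and push the relevant ratio in Lemma \ref{2S} far above $b - 1$, collapsing the dimension. The fix is to let $\log \Pi_n$ grow geometrically at the fixed rate $c$, barely exceeding $b$: this still clears the barrier at the $\liminf$ instants where $\psi(n) \approx b^n$, while keeping $\limsup_n \frac{\log s_{n+1}}{\log s_1 + \cdots + \log s_n} = c - 1$, precisely the quantity Lemma \ref{2S} converts into $1/(c+1)$. The remainder is routine bookkeeping with $\liminf$ together with the elementary inequalities $\Pi_n(x) \ge a_n(x)$ and $\Pi_n(x) \ge \prod_{k \le n} s_k$.
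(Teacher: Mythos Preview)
Your proof is correct. The upper bound is identical to the paper's: split on $b=1$ versus $b>1$, using $F(\psi)\subseteq\Pi_\infty$ and Lemma~\ref{P} in the first case, and the inclusion $F(\psi)\subseteq\{x:\Pi_n(x)\ge A^{(b')^n}\ \text{i.m.}\}$ together with Lemma~\ref{Pac} in the second, then letting $b'\to b$.

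For the lower bound you take a slightly different route. The paper stays with Lemma~\ref{Pac}: from $\psi(n)\le (b+\varepsilon)^n$ for infinitely many $n$ it gets $\{x:\Pi_n(x)\ge A^{(b+\varepsilon)^n}\ \forall n\gg1\}\subseteq F(\psi)$ and reads off dimension $1/(b+\varepsilon+1)$ directly. You instead bypass Lemma~\ref{Pac} (and hence {\L}uczak's result underlying it) and build the subset explicitly via Lemma~\ref{2S}, choosing $s_n=A^{c^{n-1}(c-1)}$ so that $\prod_{k\le n}s_k=A^{c^n-1}$ with $c=be^{2\varepsilon}$. Both constructions amount to forcing $\log\Pi_n$ to grow geometrically at a rate just above $b$; the paper's version is a line shorter because the work is packaged in Lemma~\ref{Pac}, while yours is more self-contained in that it does not appeal to {\L}uczak's lemma at this step.
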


\begin{proof}
The proof is divided into three parts: $b =1$, $1<b<\infty$ and $b=\infty$.

For the case $b=1$, since $\varphi(n)/n \to \infty$ as $n \to \infty$, we get that $F(\psi)$ is a subset of $\Pi_\infty$. By Lemma \ref{P},
\[
\dim_{\rm H}F(\psi) \leq \dim_{\rm H}\Pi_\infty =\frac{1}{2} = \frac{1}{b+1}
\]
For any $\varepsilon >0$, by the definition of $b$, we obtain $\psi(n) \leq (1+\varepsilon)^n$ for infinitely many $n$'s, and so
\[
\left\{x\in (0,1): \Pi_n(x) \geq A^{(1+\varepsilon)^n},\forall n\gg 1\right\} \subseteq F(\psi),
\]
It follows from Lemma \ref{Pac} that $\dim_{\rm H}F(\psi) \geq 1/(2+\varepsilon)$. Letting $\varepsilon \to 0^+$, we get the desired lower bound.

For the case $1<b<\infty$, let $0<\varepsilon <b-1$. By the definition of $b$, we have: (i) $\psi(n) \leq (b+\varepsilon)^n$ for infinitely many $n$'s; (ii) $\psi(n) \geq (b-\varepsilon)^n$ for sufficiently large $n$. Then
\[
F(\psi) \supseteq \left\{x\in (0,1): \Pi_n(x) \geq A^{(b+\varepsilon)^n},\forall n\gg 1\right\}
\]
and
\[
F(\psi) \subseteq \left\{x\in (0,1): \Pi_n(x)\geq A^{(b-\varepsilon)^n},\ \text{i.m.}\ n \in \mathbb{N}\right\}.
\]
Applying Lemma \ref{Pac}, we see that
\[
 \frac{1}{b+\varepsilon +1} \leq \dim_{\rm H}F(\psi) \leq \frac{1}{b-\varepsilon +1}.
\]
Since $\varepsilon$ is arbitrary, we obtain $\dim_{\rm H}F(\psi) = 1/(b+1)$.

For the case $b=\infty$, let $C>1$ be large, we have $\psi(n) \geq C^n$ for sufficiently large $n$, and so
\[
F(\psi) \subseteq \left\{x\in (0,1): \Pi_n(x)\geq A^{C^n},\ \text{i.m.}\ n \in \mathbb{N}\right\}.
\]
It follows from Lemma \ref{Pac} that $\dim_{\rm H}F(\psi) \leq 1/(C+1)$. Letting $C \to \infty$, we get that $\dim_{\rm H}F(\psi) =0$.
\end{proof}

Combining \eqref{oversubsetupper} and Lemma \ref{F}, we deduce that
\[
\dim_{\rm H}\overline{E}(\psi) \leq \frac{1}{b+1}.
\]

{\bf Lower bound:}
For the lower bound of $\dim_{\rm H}\overline{E}(\psi)$, Liao and Rams \cite[p.\,71--74]{LR} only write their proof for the case $b>1$,
but we remark that their method is indeed valid for $b \geq 1$. We will list the outline of their proof, for the sake of completeness.
For the case $b=\infty$, we have $\dim_{\rm H}\overline{E}(\psi) \geq 0$. So we assume that $1\leq b<\infty$.

Let $\phi(n):=\min_{k \geq n}\{\psi(k)\}$. As in the proof of Lemma \ref{overnon-empty}, we have seen that $\phi(n) \leq \psi(n)$ and
$\phi$ is non-decreasing. Moreover,
\begin{equation*}
\lim_{n \to \infty}\frac{\phi(n)}{n} =\infty.
\end{equation*}
Let $\varepsilon>0$ be fixed. Now we define a sequence $\{B_n\}_{n\geq 1}$ as follows:
\[
B_1:=e^{\phi(1)}  \ \ \ \text{and}\ \ \ B_n:=\min\left\{e^{\phi(n)}, B^{b+\varepsilon}_{n-1} \right\},\ \forall n\geq 2.
\]
Then: (i) $B_n\leq B_{n+1} \leq B^{b+\varepsilon}_n$ for all $n \geq 1$; (ii) $B_n \leq e^{\phi(n)} \leq e^{\psi(n)}$ for all $n \geq 1$; (iii) $B_n= e^{\psi(n)}$ for infinitely many $n$'s.
See Liao and Rams \cite[p.\,72]{LR} for the proofs.
Hence
\begin{equation}\label{Bnpro}
\limsup_{n \to \infty} \frac{\log B_{n+1}}{\log B_n} \leq b+\varepsilon \ \ \ \ \text{and} \ \ \ \ \limsup_{n \to \infty} \frac{\log B_{n}}{\psi(n)}=1.
\end{equation}
By the definition of $B_n$, we claim that
\begin{equation}\label{BN}
\lim_{n \to \infty} \frac{\log B_n}{n} =\infty.
\end{equation}
In fact, since $\lim_{n \to \infty} \phi(n)/n =\infty$, it follows from the definition of $B_n$ that
\[
\liminf_{n \to \infty} \frac{\log B_n}{n} = (b+\varepsilon)\cdot \liminf_{n \to \infty} \frac{\log B_{n-1}}{n},
\]
which means that\eqref{BN} holds. Write
\[
b_1:=B_1 \ \ \ \text{and} \ \ \ b_n:=\frac{B_n}{B_{n-1}}, \forall n \geq 2.
\]
Then $b_n \geq 1$ and $B_n=b_1\cdots b_n$. By the second equation of \eqref{Bnpro}, we have $\mathbb{E}(\{b_n\}) \subseteq \overline{E}(\psi)$, and so $\dim_{\rm H}\overline{E}(\psi) \geq \dim_{\rm H}\mathbb{E}(\{b_n\})$.
Combining this with \eqref{Bnpro}, \eqref{BN} and Lemma \ref{2S}, we deduce that
\[
\dim_{\rm H}\overline{E}(\psi)\geq \left(2+\limsup_{n \to \infty}\frac{\log b_{n+1}}{\log b_1+\cdots+\log b_n}\right)^{-1} \geq \frac{1}{b+1+\varepsilon}.
\]
Therefore, $\dim_{\rm H}\overline{E}(\psi) \geq 1/(b+1)$ as $\varepsilon$ is arbitrary.

\subsection{Hausdorff dimension of $\underline{E}(\psi)$}
Being similar to Lemma \ref{overiff}, we obtain an analogous result for $\underline{E}(\psi)$.

\begin{lemma}\label{underiff}
$\underline{E}(\psi)$ is non-empty if and only if $\psi$ is liminf-equivalent to a non-decreasing function.
\end{lemma}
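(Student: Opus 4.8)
The plan is to follow the proof of Lemma \ref{overiff} almost verbatim, replacing $\limsup$ by $\liminf$ throughout; the only place that deserves a second thought is the construction in the ``if'' direction.

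For the ``only if'' part, I would take $x\in\underline{E}(\psi)$ and set $\phi(n):=\log a_1(x)+\cdots+\log a_n(x)+1$. Since $a_k(x)\geq 1$, this $\phi$ is non-decreasing with values in $\mathbb{R}^+$, and because $\psi(n)\to\infty$ the additive constant is negligible, so $\liminf_{n\to\infty}\phi(n)/\psi(n)=\liminf_{n\to\infty}\big(\log a_1(x)+\cdots+\log a_n(x)\big)/\psi(n)=1$; that is, $\psi$ is liminf-equivalent to the non-decreasing function $\phi$.

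For the ``if'' part, suppose $\psi$ is liminf-equivalent to a non-decreasing $\phi:\mathbb{N}\to\mathbb{R}^+$, i.e.\ $\liminf_{n\to\infty}\phi(n)/\psi(n)=1$. As in Lemma \ref{overiff}, $\phi$ cannot be fed directly into a continued fraction, because a non-decreasing $\phi$ with $\liminf\phi/\psi=1$ need not satisfy $\phi(n)/n\to\infty$ (it may stay flat on long blocks between the instants where it catches up with $\psi$). So I would first pick a strictly increasing sequence $\{n_k\}$ with $\psi(n)/n\geq k^2$ for all $n\geq n_k$ (possible since $\psi(n)/n\to\infty$), set $\alpha_n:=k+1$ for $n_k\leq n<n_{k+1}$ with the convention $n_0\equiv 1$, and put $\widehat\phi(n):=\phi(n)+\log(\alpha_1\cdots\alpha_n)$. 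Then $\widehat\phi$ is non-decreasing with values in $\mathbb{R}^+$; since $\alpha_n\to\infty$, the Ces\`aro mean $\frac1n\sum_{j=1}^n\log\alpha_j\to\infty$, hence $\widehat\phi(n)/n\to\infty$; and for $n_k\leq n<n_{k+1}$ one has $\log(\alpha_1\cdots\alpha_n)\leq n\log(k+1)\leq k^{-2}\log(k+1)\,\psi(n)$, so $\log(\alpha_1\cdots\alpha_n)/\psi(n)\to 0$ and therefore $\liminf_{n\to\infty}\widehat\phi(n)/\psi(n)=\liminf_{n\to\infty}\phi(n)/\psi(n)=1$. Finally I would take $\widehat x:=[a_1,a_2,\dots]$ with $a_n:=\lfloor e^{\widehat\phi(n)-\widehat\phi(n-1)}\rfloor$ and $\widehat\phi(0):=0$; a direct check gives $|\log a_n-(\widehat\phi(n)-\widehat\phi(n-1))|<\log 2$ for every $n$ (distinguishing $\widehat\phi(n)-\widehat\phi(n-1)\geq\log 2$, where $a_n\in(e^{\widehat\phi(n)-\widehat\phi(n-1)}-1,\,e^{\widehat\phi(n)-\widehat\phi(n-1)}]$, from $\widehat\phi(n)-\widehat\phi(n-1)<\log 2$, where $a_n=1$), so $|\log\Pi_n(\widehat x)-\widehat\phi(n)|<n\log 2$ and hence $\log\Pi_n(\widehat x)/\widehat\phi(n)\to 1$ because $\widehat\phi(n)/n\to\infty$. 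Combining this with $\liminf_{n\to\infty}\widehat\phi(n)/\psi(n)=1$ yields $\liminf_{n\to\infty}\log\Pi_n(\widehat x)/\psi(n)=1$, i.e.\ $\widehat x\in\underline{E}(\psi)$, so $\underline{E}(\psi)$ is non-empty.

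I do not expect a genuine obstacle: the argument runs parallel to Lemma \ref{overiff}. The single point that must be handled with care is checking that the auxiliary factors $\alpha_n$ do two things simultaneously, namely (a) force $\widehat\phi(n)/n\to\infty$ --- which is exactly the Ces\`aro remark --- and (b) are small enough that $\log(\alpha_1\cdots\alpha_n)/\psi(n)\to 0$, so that passing from $\phi$ to $\widehat\phi$ leaves the liminf equal to $1$; the calibration $\psi(n)/n\geq k^2$ on the block $[n_k,n_{k+1})$ is precisely what makes both hold at once.
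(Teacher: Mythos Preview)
Your argument is correct and follows the same template as the paper, but you have one false side remark and, because of it, an unnecessary detour. You write that ``a non-decreasing $\phi$ with $\liminf\phi/\psi=1$ need not satisfy $\phi(n)/n\to\infty$''. In fact it must: $\liminf_{n\to\infty}\phi(n)/\psi(n)=1$ forces $\phi(n)\geq\tfrac12\psi(n)$ for all large $n$, and since $\psi(n)/n\to\infty$ this gives $\phi(n)/n\to\infty$ immediately. (You are thinking of the $\limsup$ situation of Lemma~\ref{overiff}, where only $\phi(n)\leq(1+\varepsilon)\psi(n)$ is guaranteed and $\phi$ really can stay flat on long blocks.)

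The paper makes exactly this observation and therefore drops the auxiliary sequence $\{\alpha_n\}$ altogether: one can feed $\phi$ itself into the construction of $\widehat{x}$ with $a_n=\lfloor e^{\phi(n)-\phi(n-1)}\rfloor$. Your extra step is harmless---the modified $\widehat\phi$ works for the same reasons---so the proof goes through; it is just longer than necessary, and the sentence you use to motivate the modification is the one place where what you assert is untrue.
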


\begin{proof}
The proof is very similar to that of Lemma \ref{overiff}.
For the ``if" part, we remark that if $\psi$ is liminf-equivalent to $\varphi$, then
\[
\lim_{n \to \infty}\frac{\varphi(n)}{n} =\infty.
\]
So there is no need to make a modification in constructing the point $\widehat{x}$ as in the proof of Lemma \ref{overiff}.
\end{proof}

The result of Lemma \ref{underiff} helps to show that $\underline{E}(\psi)$ is non-empty.

\begin{lemma}\label{undernon-empty}
$\underline{E}(\psi)$ is always non-empty.
\end{lemma}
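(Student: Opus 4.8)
The plan is to reduce the statement to Lemma \ref{underiff}, exactly in the spirit of the reduction of Lemma \ref{overnon-empty} to Lemma \ref{overiff}: it suffices to exhibit a non-decreasing function $\phi:\mathbb{N}\to\mathbb{R}^+$ to which $\psi$ is liminf-equivalent, i.e.\ with $\liminf_{n\to\infty}\phi(n)/\psi(n)=1$.

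The envelope $\phi(n):=\min_{k\geq n}\{\psi(k)\}$ used in Lemma \ref{overnon-empty} is the wrong choice here, since it only controls the \emph{limsup} of $\phi(n)/\psi(n)$. Instead I would take the running maximum
\[
\phi(n):=\max_{1\leq k\leq n}\{\psi(k)\}.
\]
This $\phi$ is non-decreasing and positive by construction, and since $\psi(n)/n\to\infty$ forces $\psi(n)\to\infty$, we also get $\phi(n)\geq\psi(n)\to\infty$ and $\phi(n)/n\to\infty$ (the latter being automatic anyway, as remarked in the proof of Lemma \ref{underiff}). The key points are: first, $\phi(n)\geq\psi(n)$ for every $n$, so $\phi(n)/\psi(n)\geq 1$; second, $\phi(n)=\psi(n)$ whenever $\psi(n)$ is a ``record'', that is whenever $\phi(n)>\phi(n-1)$, because then the maximum defining $\phi(n)$ must be attained at $k=n$. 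Since $\phi$ is non-decreasing with $\phi(n)\to\infty$, it cannot be eventually constant, hence it strictly increases infinitely often, so there are infinitely many $n$ with $\phi(n)=\psi(n)$, along which $\phi(n)/\psi(n)=1$. Combining the two observations yields $\liminf_{n\to\infty}\phi(n)/\psi(n)=1$, i.e.\ $\psi$ is liminf-equivalent to the non-decreasing function $\phi$, and Lemma \ref{underiff} then gives that $\underline{E}(\psi)$ is non-empty.

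I do not anticipate any real obstacle; the only thing to get right is the choice of monotone envelope — the infimum envelope is adapted to limsup-equivalence, while for liminf-equivalence one needs the supremum (running maximum) envelope — together with the trivial remark that records occur infinitely often because $\psi(n)\to\infty$.
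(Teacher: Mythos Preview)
Your proposal is correct and mirrors the paper's own proof almost verbatim: the paper also takes $\varphi(n):=\max_{1\leq k\leq n}\{\psi(k)\}$, notes $\varphi(n)\geq\psi(n)$, observes that $\varphi(n)\neq\psi(n)$ forces $\varphi(n)=\varphi(n-1)$ so that equality holds infinitely often, concludes $\liminf_{n\to\infty}\varphi(n)/\psi(n)=1$, and invokes Lemma~\ref{underiff}. Your phrasing in terms of ``records'' is exactly the same mechanism.
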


\begin{proof}
Let $\varphi(n):=\max_{1\leq k\leq n}\{\psi(k)\}$. Then $\varphi$ is non-decreasing and $\varphi(n) \geq \psi(n)$, where the latter implies that
\[
\liminf_{n \to \infty} \frac{\varphi(n)}{\psi(n)} \geq 1.
\]
Note that
\[
\varphi(n) \neq \psi(n)\ \ \Longrightarrow \ \ \varphi(n) = \max_{1\leq k\leq n}\{\psi(k)\} = \max_{1\leq k\leq n-1}\{\psi(k)\} = \varphi(n-1),
\]
so we see that $\varphi(n) = \psi(n)$ for infinitely many $n$'s and then
\[
\liminf_{n \to \infty} \frac{\varphi(n)}{\psi(n)} \leq 1.
\]
Therefore, $\psi$ is liminf-equivalent to the non-decreasing function $\varphi$. By Lemma \ref{underiff}, $\underline{E}(\psi)$ is non-empty.
\end{proof}

We are now in a position to calculate the Hausdorff dimension of $\underline{E}(\psi)$. The proof is divided into two parts: the upper bound and the lower bound of $\dim_{\rm H}\underline{E}(\psi)$

{\bf Upper bound:}  For any $0<\varepsilon<1$, we obtain
\[
\underline{E}(\psi) \subseteq \left\{x\in (0,1): \Pi_n(x)\geq e^{(1-\varepsilon)\psi(n)},\ \forall n \gg 1\right\}.
\]
This leads to study the Hausdorff dimension of the latter set.

\begin{lemma}\label{Fwidehat}
Let $A \in (1,\infty)$. Write
\[
\widehat{F}(\psi):=\left\{x\in (0,1): \Pi_n(x)\geq A^{\psi(n)},\ \forall n \gg 1\right\}.
\]
Then
\[
\dim_{\rm H}\widehat{F}(\psi) = \frac{1}{B+1},
\]
where $B \in [1,\infty]$ is defined as in Theorem \ref{OU}.
\end{lemma}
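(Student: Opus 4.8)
The plan is to imitate the three‑case proof of Lemma \ref{F} almost line for line, with the $\liminf$‑quantity $b$ replaced everywhere by the $\limsup$‑quantity $B$; in each regime ($B=1$, $1<B<\infty$, $B=\infty$) I would estimate $\dim_{\rm H}\widehat{F}(\psi)$ from below and from above separately, and Lemmas \ref{P} and \ref{Pac} would do all of the work. The only structural change is in the direction of the one‑sided control on $\psi$: whereas for $F(\psi)$ one uses that $\psi(n)\le (b+\varepsilon)^n$ holds for infinitely many $n$, here one uses that $\psi(n)\le (B+\varepsilon)^n$ holds for \emph{all} large $n$ — which is exactly what the $\limsup$ defining $B$ provides — together with the fact that $\psi(n)\ge (B-\varepsilon)^n$ holds for infinitely many $n$.

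For the lower bound I would argue uniformly in the range $1\le B<\infty$ (the case $B=\infty$ being trivial since then $1/(B+1)=0$). Fixing $\varepsilon>0$ and using that $\limsup_{n\to\infty}\log\psi(n)/n=\log B<\log(B+\varepsilon)$, so that $\psi(n)\le (B+\varepsilon)^n$ for all large $n$, I would record the inclusion
\[
\left\{x\in(0,1):\Pi_n(x)\ge A^{(B+\varepsilon)^n},\ \forall n\gg 1\right\}\subseteq \widehat{F}(\psi),
\]
and then invoke Lemma \ref{Pac} (legitimate since $B+\varepsilon\in(1,\infty)$) to get $\dim_{\rm H}\widehat{F}(\psi)\ge 1/(B+\varepsilon+1)$; letting $\varepsilon\to 0^{+}$ would give $\dim_{\rm H}\widehat{F}(\psi)\ge 1/(B+1)$.

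For the upper bound I would split into the three cases. When $B=1$, since $\psi(n)/n\to\infty$, every $x\in\widehat{F}(\psi)$ lies in $\Pi_\infty$ (because $\Pi_n(x)\ge A^{\psi(n)}$ for large $n$ and $\psi(n)/n\to\infty$), so Lemma \ref{P} gives $\dim_{\rm H}\widehat{F}(\psi)\le 1/2$. When $1<B<\infty$, fixing $0<\varepsilon<B-1$ and using that $\psi(n)\ge (B-\varepsilon)^n$ for infinitely many $n$, I would obtain
\[
\widehat{F}(\psi)\subseteq\left\{x\in(0,1):\Pi_n(x)\ge A^{(B-\varepsilon)^n},\ \text{i.m.}\ n\in\mathbb{N}\right\}
\]
and apply Lemma \ref{Pac} to get $\dim_{\rm H}\widehat{F}(\psi)\le 1/(B-\varepsilon+1)$, then let $\varepsilon\to 0^{+}$. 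When $B=\infty$, the same inclusion with an arbitrary $C>1$ in place of $B-\varepsilon$ gives $\dim_{\rm H}\widehat{F}(\psi)\le 1/(C+1)$ for all $C$, hence $\dim_{\rm H}\widehat{F}(\psi)=0$. In each case the upper and lower estimates match the claimed value $1/(B+1)$.

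I do not anticipate a genuine obstacle: the argument is essentially a transcription of the proof of Lemma \ref{F}. The one point deserving care is the bookkeeping of quantifiers — the inclusion used for the lower bound needs $\psi(n)\le (B+\varepsilon)^n$ to hold for all large $n$ (not merely infinitely often), and this is exactly where the $\limsup$ hypothesis enters and why the lower‑bound step becomes uniform over $1\le B<\infty$; dually, when checking the inclusion of $\widehat{F}(\psi)$ into the ``i.m.'' set in the case $1<B<\infty$, one must note that the ``for all large $n$'' in the definition of $\widehat{F}(\psi)$ still leaves infinitely many indices $n$ at which $\psi(n)\ge(B-\varepsilon)^n$.
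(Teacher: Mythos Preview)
Your proposal is correct and is exactly the approach the paper intends: the paper's own proof reads in full ``The proof is very similar to that of Lemma~\ref{F}, so the details are left to interested readers,'' and what you outline is precisely that transcription, with the quantifier swap (from ``i.m.'' to ``$\forall n\gg1$'' and from $\liminf$ to $\limsup$) handled correctly.
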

\begin{proof}
The proof is very similar to that of Lemma \ref{F}, so the details are left to interested readers. See also the proof of Theorem 4.4 of \cite{FW}.
\end{proof}

By Lemma \ref{Fwidehat}, we deduce that
\[
\dim_{\rm H}\underline{E}(\psi) \leq \frac{1}{B+1}.
\]

{\bf Lower bound:} For the lower bound of $\dim_{\rm H}\underline{E}(\psi)$, we point out that the method of Liao and Rams \cite[p.\,75--78]{LR} does not work for the case $B=1$.
In particular, they need the following key condition in their proof:
\[
\liminf_{n\to\infty}\frac{\psi(1)+\cdots\psi(n)}{\psi(n)} <\infty,
\]
see Lemma 3.2 of \cite{LR}. However, it is impossible to be true for the power function $\psi(n) = n^p$ with $p>1$.
We will make a modification of their original proof to overcome these difficulties.

For the case $B=\infty$, it is easy to see that $\dim_{\rm H}\underline{E}(\psi) \geq 0$. Now we assume that $1\leq B<\infty$.
For any $\varepsilon >0$, since
\[
\log B =\limsup_{n \to \infty}\frac{\log \psi(n)}{n},
\]
we have $\psi(n) \leq (B+\varepsilon/2)^n$ for sufficiently large $n$. This implies that, for fixed $j \in \mathbb{N}$,
\begin{equation}\label{tendzero}
\psi(n)(B+\varepsilon)^{j-n} \leq (B+\varepsilon/2)^n(B+\varepsilon)^{j-n} = (B+\varepsilon)^{j}\left(\frac{B+\varepsilon/2}{B+\varepsilon}\right)^n \to 0\ \ \text{as}\ n \to \infty.
\end{equation}
Let $T_j = \sup_{k \geq 1} \big\{c_{j,k}\big\}$ for all $j \geq 1$, where $c_{j,k}$ is defined as
\[
c_{j,k} :=
\left\{
  \begin{array}{ll}
    e^{\psi(k)}, & \hbox{$1\leq k \leq j$;} \\
    e^{\psi(k)(B+\varepsilon)^{j-k}}, & \hbox{$k \geq j+1$.}
  \end{array}
\right.
\]
By \eqref{tendzero}, the supremum in the definition of $T_j$ is achieved. Moreover, $c_{j,k} = c_{j+1, k}$ for all $1\leq k \leq j$ and $(c_{j,k})^{B+\varepsilon} = c_{j+1,k}$ for all $k \geq j+1$, which yield that $T_j \leq T_{j+1} \leq T^{B+\varepsilon}_j$ for all $j \geq 1$.

We claim that
\begin{equation}\label{T}
\liminf_{j\to\infty}\frac{ \log T_j}{\psi(j)}=1.
\end{equation}
By the definition of $T_j$, we get that $T_j \geq c_{j,j}=e^{\psi(j)}$ for all $j \geq 1$, and so
\[
\liminf\limits_{j\to\infty}\frac{\log T_j}{\psi(j)}\geq1.
\]
For the opposite inequality, let $t_j := \min\{k \geq 1: c_{j,k} =T_j\}$; that is the smallest number $k$ for which $c_{j,k}$ achieves the supremum in the definition of $T_j$.
Then $t_j \leq t_{j+1}$, and $t_j \to \infty$ as $j \to \infty$. Moreover,
\begin{equation}\label{tj}
 c_{j,t_j} > c_{j,k},\  \forall1\leq k <t_j\ \ \ \ \ \text{and}\ \ \ \  \ c_{j,t_j} \geq c_{j,k},\ \forall  k >t_j.
\end{equation}
We will show that $T_{t_j} = e^{\psi(t_j)}$ in the following three cases.
\begin{itemize}
  \item For $t_j <j$, we see that $c_{t_j, t_j}=c_{j,t_j}$, $c_{j,k} =c_{t_j, k}$ for all $1\leq k <t_j$ and $c_{j,k}>c_{t_j,k}$ for all $k >t_j$. Combining this with \eqref{tj}, we get that
\[
c_{t_j, t_j}>c_{t_j,k}, \ \forall  1\leq k <t_j \ \ \ \ \text{and} \ \ \ \ c_{t_j, t_j} >c_{t_j,k}, \ \forall k >t_j,
\]
which gives $T_{t_j} = c_{t_j, t_j}=e^{\psi(t_j)}$ by the definition of $T_{t_j}$.

  \item For $t_j =j$, we have $T_{t_j}=T_j=c_{j,t_j} =c_{t_j,t_j}=e^{\psi(t_j)}$.

  \item For $t_j >j$, we deduce that $c_{t_j,k}=c_{j,k}$ for all $1\leq k\leq j$, $c_{t_j,k} = (c_{j,k})^{(B+\varepsilon)^{k-j}}$ for all $j <k \leq t_j$ and $c_{t_j,k} = (c_{j,k})^{(B+\varepsilon)^{t_j -j}}$ for all $k >t_j$. Combining this with \eqref{tj}, we see that
\[
c_{t_j,t_j} = (c_{j,t_j})^{(B+\varepsilon)^{t_j-j}} > c_{j,t_j}>   c_{j,k} = c_{t_j,k}, \ \ \ \forall 1\leq k\leq j,
\]
\[
c_{t_j,t_j} = (c_{j,t_j})^{(B+\varepsilon)^{t_j-j}} >  (c_{j,k})^{(B+\varepsilon)^{k-j}} =c_{t_j,k},\ \ \ \forall j< k<t_j
\]
and
\[
c_{t_j,t_j} = (c_{j,t_j})^{(A+\varepsilon)^{t_j-j}} \geq (c_{j,k})^{(B+\varepsilon)^{t_j-j}}=c_{t_j,k},\ \ \ \forall k>t_j,
\]
which implies that $T_{t_j} = c_{t_j, t_j}=e^{\psi(t_j)}$.
\end{itemize}
Then
\[
\liminf\limits_{j\to\infty}\frac{\log T_j}{\phi(j)}\leq \liminf\limits_{j\to\infty}\frac{\log T_{t_j}}{\phi(t_j)} =1
\]
and so \eqref{T} holds.

Define $c_1:=T_1$ and
\[
c_n := \frac{T_n}{T_{n-1}},\ \forall n\geq 2.
\]
Then $c_n \geq 1$, $T_n = c_1\cdots c_n$ and
\begin{equation}\label{subsetunder}
\liminf_{n \to \infty} \frac{\log c_1+\cdots+\log c_n}{\psi(n)} =1
\end{equation}
by \eqref{T}. Since $\psi(n)/n \to \infty$ as $n \to \infty$, we see that
\begin{equation}\label{demunder1}
\lim_{n \to \infty} \frac{\log c_1+\cdots+\log c_n}{n} =\infty.
\end{equation}
Note that $T_{n+1} \leq T^{B+\varepsilon}_n$, so we deduce that
\begin{equation}\label{demunder2}
\limsup_{n \to \infty} \frac{\log c_{n+1}}{\log c_1+\cdots+\log c_n} \leq B+\varepsilon -1.
\end{equation}
Applying the sequence $\{c_n\}$ to Lemma \ref{2S}, \eqref{subsetunder} implies that $\mathbb{E}(\{c_n\})$ is a subset of $\underline{E}(\psi)$.
In view of \eqref{demunder1} and \eqref{demunder2}, we conclude that
\[
\dim_{\rm H}\underline{E}(\psi) \geq \dim_{\rm H}\mathbb{E}(\{c_n\}) = \left(2+\limsup_{n \to \infty}\frac{\log c_{n+1}}{\log c_1+\cdots+\log c_n}\right)^{-1} \geq \frac{1}{B+1+\varepsilon}.
\]
Since $\varepsilon$ is arbitrary, we obtain the desired lower bound.

{\bf Acknowledgement:}
The research is supported by National Natural Science Foundation of China (Grant Nos.~11771153, 11801591, 12171107), Guangdong Natural Science Foundation (Grant No.\,2018B0303110005)
and Guangdong Basic and Applied Basic Research Foundation (Grant  No. 2021A1515010056).

\end{document}